\numberwithin{equation}{section}
\newcommand\pgfmathsinandcos[3]{%
	\pgfmathsetmacro#1{sin(#3)}%
	\pgfmathsetmacro#2{cos(#3)}%
}
\newcommand\LongitudePlane[3][current plane]{%
	\pgfmathsinandcos\sinEl\cosEl{#2} 
	\pgfmathsinandcos\sint\cost{#3} 
	\tikzset{#1/.style={cm={\cost,\sint*\sinEl,0,\cosEl,(0,0)}}}
}
\newcommand\LatitudePlane[3][current plane]{%
	\pgfmathsinandcos\sinEl\cosEl{#2} 
	\pgfmathsinandcos\sint\cost{#3} 
	\pgfmathsetmacro\yshift{\cosEl*\sint}
	\tikzset{#1/.style={cm={\cost,0,0,\cost*\sinEl,(0,\yshift)}}} %
}
\newcommand\DrawLongitudeCircle[3][2]{
	\LongitudePlane{\angEl}{#2}
	\tikzset{current plane/.prefix style={scale=#1}}
	\pgfmathsetmacro\angVis{atan(sin(#2)*cos(\angEl)/sin(\angEl))} %
	\draw[current plane, color=#3] (\angVis:1) arc (\angVis:\angVis+180:1);
	\draw[current plane,dashed, color=#3] (\angVis-180:1) arc (\angVis-180:\angVis:1);
}
\newcommand\DrawLatitudeCircle[3][2]{
	\LatitudePlane{\angEl}{#2}
	\tikzset{current plane/.prefix style={scale=#1}}
	\pgfmathsetmacro\sinVis{sin(#2)/cos(#2)*sin(\angEl)/cos(\angEl)}
	\pgfmathsetmacro\angVis{asin(min(1,max(\sinVis,-1)))}
	\draw[current plane, color=#3] (\angVis:1) arc (\angVis:-\angVis-180:1);
	\draw[current plane,dashed, color=#3] (180-\angVis:1) arc (180-\angVis:\angVis:1);
}
\theoremstyle{plain}
\newtheorem{theorem}{Theorem}[section]
\newtheorem{lemma}[theorem]{Lemma}
\newtheorem{proposition}[theorem]{Proposition}
\newtheorem{corollary}[theorem]{Corollary}
\theoremstyle{definition}
\newtheorem{definition}[theorem]{Definition}
\newtheorem{example}[theorem]{Example}
\newtheorem{assumption}[theorem]{Assumption}
\newtheorem{remark}[theorem]{Remark}
\renewcommand{\epsilon}{\varepsilon}
\newcommand{\RR}{\mathbb{R}}
\newcommand{\CC}{\mathbb{C}}
\newcommand{\NN}{\mathbb{N}}    
\newcommand{\ZZ}{\mathbb{Z}}
\newcommand{\interior}{{\operatorname{int}}}
\numberwithin{equation}{section}
\numberwithin{figure}{section}
\title[Graph structure of the nodal set of eigenfunctions]{Graph structure of the nodal set and bounds on the number of critical points of eigenfunctions on Riemannian manifolds}
\author[M.~Hofmann]{Matthias Hofmann} 
\author[M.~Täufer]{Matthias Täufer}
\address{Matthias Hofmann, Fakult\"at Mathematik und Informatik, Fern\-Universit\"at in Hagen, D-58084 Hagen, Germany}
\address{Matthias Täufer, Fakult\"at Mathematik und Informatik, Fern\-Universit\"at in Hagen, D-58084 Hagen, Germany}
\thanks{\emph{Acknowledgements.} The authors would like to express their gratitude to the COST Action 18232 MAT-DYN-NET (Mathematical models for interacting dynamics on networks) for enabling numerous fruitful collaborations during the last years. The MAT-DYN-NET final conference in Braga, Portugal, in February 2024 helped to initiate this work. M. Hofmann was partially supported by the Portuguese government through FCT - Fundação para a Ciência e a Tecnologia, I.P., under the project SpectralOPs with reference 2023.13921.PEX. M. Hofmann thanks Winfried Hochstädtler for helpful discussions regarding the validity of Euler's formula for embedded graphs and Jens Wirth for suggesting Example~\ref{ex:infinite}.
The authors also thank the anonymous referee for numerous helpful remarks on an earlier version of the manuscript and in particular for drawing their attention to the Euler inequality in~\cite{KarpukhinKP-14}.}
\begin{document}

\begin{abstract}
In this article, we  illustrate and draw connections between the geometry of zero sets of eigenfunctions, graph theory and the vanishing order of eigenfunctions. 
We identify the nodal set of an eigenfunction of the Laplacian (with smooth potential) on a compact, two-dmensional Riemannian manifolds, that is on Riemannian surfaces, as an \emph{embedded metric graph} and then use tools from elementary graph theory in order to estimate the number of critical points in the nodal set of the $k$-th eigenfunction and the sum of vanishing orders at critical points in terms of $k$ and the Euler-Poincar\'e characteristic of the surface. 
\end{abstract}

\maketitle

\section{Introduction}
    \label{sec:introduction}

In this article we illustrate and use connections between the geometry of zero sets of eigenfunctions, graph theory, vanishing order of eigenfunctions. 
Given an eigenfunction $u$ of the Laplace-Beltrami operator on a compact and connected Riemannian surface with smooth potential, its so-called \emph{nodal set} has a particular geometry: It consists of a finite number of arcs, meeting at certain points: the so-called \emph{critical points in the nodal set}.
In other words, the nodal set essentially constitutes a graph or a network, embedded in the surface, with edges given by arcs of the nodal set and vertices by critical points.

Critical points in the nodal set of eigenfunctions have been the subject of numerous investigations~\cite{Cheng-76, DonnellyF-88, DonnellyF-90} and they are closely connected to the concepts of vanishing order, doubling estimates and unique continuation in the sense that they are points where eigenfunctions can exhibit their most extreme behaviour, see e.g.~\cite{Zeldich-09, TaeuferTV-16}, and the references therein.
Most investigation so far have either focused on the Hausdorff measure of the nodal set itself or on the \emph{maximal order of vanishing} of eigenfunctions at critical points. 
Usually, these estimates are based on local techniques, zeroing in onto critical points.

In this article, we take a more global perspective and consider the nodal set as a \emph{network}, that is an embedded graph in a surface.
Together with methods from graph theory this allows us to deduce
\begin{itemize}
    \item 
    a bound on the overall number of critical points,
    \item 
    a bound on the sum over vanishing orders at critical points,
\end{itemize}
in terms of the number of nodal domains and the Euler-Poincar\'e characteristic of the surface. 
We consider the situation where the  surface has a non-empty boundary but believe that our main results are already interesting in the case without boundary, that is for closed surfaces.
To illustrate this, we demonstrate the sharpness of our estimates via examples on the sphere and the torus.

This article is structured as follows:
Section~\ref{sec:preliminaries} contains definitions, notation, and recalls the necessary results to make the intuition rigorous that nodal sets have a graph structure.
Sections~\ref{sec:results} contains our main results and their proofs: various bounds on the number of critical points and on the sum of vanishing orders. In order to obtain these bounds we use nodal partitions and study their topological features by using graph related results, most notably Euler's formula.
Finally, Section~\ref{sec:examples} demonstrates the optimality of our main results by means of several examples.

\section{Preliminaries}\label{sec:preliminaries}
\subsection{Setting the stage}

In this article, $(M,g)$ is a compact, connected $2$-dimensional Riemannian manifold, that is a \emph{Riemannian surface}.
If the boundary $\partial M$ is non-empty it is smooth and homeomorphic to a finite, disjoint union of sets homeomorphic to $S^1$, called \emph{contours}. 
We will refer to $\interior(M)= M \setminus \partial M$ as the interior of the manifold.
If $\partial M$ is empty, $M$ is simply called a \emph{closed surface}.
The family of inner products $g = (g_z)_{z \in M}$, smoothly varying in $z$, is called a \emph{Riemannian metric}
and endows $M$ with a natural notion of angles and distance.

\subsection{Function spaces and operators}

Functions $f \colon M \to \CC$ are smooth if their composition with coordinate charts are smooth functions from open sets in $\RR^2$ to $\CC$. 
Denote by $C_0^\infty(M)$ the space of smooth functions on $M$ with compact support in $\interior(M)$.
In the case of empty boundary, this is simply the space of smooth function on $M$.
We denote by $\nabla f$ their \emph{gradient}, that is the vector field satisfying 
\[
    g_z( \nabla f, X) = \partial_X f (z)
    \quad
    \text{for all vector fields $X$},
\]
where $\partial_X f$ is the directional derivative in direction $X(z)$ of $f$ in $z \in M$.
The $H^1$-Sobolev norm on $M$ is  
\[
    \lVert f \rVert_{H^1(M)}^2
    =
    \lVert f \rVert_{L^2(M)}^2
    +
    \int_{M} g(\nabla f, \nabla f),
\]
and the Sobolev space $H^1_0(M)$ is the completion of $C_0^\infty$ under this norm.
In the case of empty boundary, it is customary to simply call this space $H^1(M)$.
The (negative of the) Laplace-Beltrami operator, $- \Delta$, is the maximal self-adjoint operator, associated with the quadratic form
\[
    a(f,g)
    :=
    \int_{M} g(\nabla f, \nabla g)
    \quad
    \text{on the form domain $H^1_0(M)$}.
\]
In particular, by compactness of the embedding $H^1_0(M) \subset \subset L^2(M)$, the operator $- \Delta$ is nonnegative with purely discrete spectrum. 
In the case of non-empty boundary, $\Delta$ will impose a Dirichlet boundary condition on $\partial M$.
In this note, we stick to notation usual in the PDE community where $- \Delta$ is a nonnegative operator whereas in differential geometry, the minus is routinely omitted.

We are interested in the eigenvalue problem in $M$
\begin{equation}\label{eq:intro}
-\Delta u+ mu = \lambda u,
\end{equation}
for $m\in C^\infty(M)$
The operator $- \Delta + m$ is again self-adjoint and lower semibounded with with discrete spectrum, i.e. there is a nondecreasing sequence of eigenvalues 
\[
    - \infty < \lambda_1 < \lambda_2 \leq  \lambda_3 \leq \dots \nearrow \infty,
\]
counting multiplicities.
Furthermore, there is an associated sequence of eigenfunctions $u_1, u_2, \ldots$ forming an orthonormal basis of $L^2(M)$, cf.~\cite[Section~3.2]{Jost2017}.
By classic elliptic regularity theory, the $u_k$ are actually smooth functions on $M$.

While in some cases such as the flat torus or the sphere, discussed in Section~\ref{sec:examples}, explicit solutions of \eqref{eq:intro} can be calculated, in general less is known about the form or shape of eigenfunctions. 
One tool to better understand their geometry are their \emph{nodal sets} and \emph{critical points}. 
We define:

\begin{definition}[Nodal set and nodal domains]
For $u\in C(M)$, the \emph{nodal set} of $u$ is
\[
N(u)= \{x\in M | u(x)=0\}.
\]
The number of connected components of $N(u)$ is denoted by $n(u)$.
Furthermore, we call the connected components of $M \setminus N(u)$ the \emph{nodal domains} of $u$. 
The number of nodal domains is the \emph{nodal count}, denoted by $\mu(u)$.  
\end{definition}

We emphasize that in this notation, if $u$ is an eigenfuntion of $- \Delta + m$ and $\partial M \neq \emptyset$, then $\partial M \subset N(u)$, that is we consider the boundary as part of the nodal set.
An important result by Courant is the so-called \emph{Nodal Domain Theorem}, cf.~\cite{CourantH-66} and~\cite[VI §6]{CourantH-2008}.
\begin{proposition}[Courant's nodal domain theorem]
	\label{prop:courant}
Let $u_k$ be an eigenfunction of $- \Delta + m$ associated to $\lambda_k$.
Then $\mu(u_k) \leq k$.
\end{proposition}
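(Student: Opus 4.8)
The plan is to argue by contradiction, combining the Courant--Fischer min-max characterisation of the eigenvalues of $-\Delta+m$ with the unique continuation principle for second-order elliptic equations. Suppose $\mu(u_k)\geq k+1$. Since $-\Delta+m$ is a real operator, we may assume $u_k$ is real-valued, and we fix $k+1$ pairwise distinct nodal domains $\Omega_1,\dots,\Omega_{k+1}$ of $u_k$. On each $\Omega_j$ the smooth function $u_k$ does not change sign, and $\partial\Omega_j\subset N(u_k)$. Define $v_j\colon M\to\RR$ to equal $u_k$ on $\Omega_j$ and $0$ on $M\setminus\Omega_j$. First I would check that $v_j$ is a non-zero element of $H^1_0(M)$, with weak gradient equal to $\nabla u_k$ on $\Omega_j$ and to $0$ elsewhere; this rests on the regularity of $u_k$ and on the fact that it vanishes continuously along $\partial\Omega_j$. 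Pairing the equation $-\Delta u_k+mu_k=\lambda_k u_k$ with $v_j$ and integrating by parts over $\Omega_j$ (no boundary term appears, since $u_k$ vanishes on $\partial\Omega_j$) then yields the form identity
\[
\int_M g(\nabla v_j,\nabla v_j)+\int_M m\,v_j^2=\lambda_k\,\lVert v_j\rVert_{L^2(M)}^2 .
\]

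Next I would exploit that the nodal domains $\Omega_j$ are pairwise disjoint: the supports of distinct $v_j$ meet only in sets of measure zero, so $v_1,\dots,v_{k+1}$ are linearly independent and all cross terms $\int_M v_iv_j$, $\int_M g(\nabla v_i,\nabla v_j)$ and $\int_M m\,v_iv_j$ vanish for $i\neq j$. Hence every non-zero element of the $(k+1)$-dimensional space $V:=\operatorname{span}\{v_1,\dots,v_{k+1}\}\subset H^1_0(M)$ has Rayleigh quotient (for the quadratic form of $-\Delta+m$) exactly equal to $\lambda_k$. Since $\dim V=k+1$, the min-max principle gives $\lambda_{k+1}\leq\lambda_k$, hence $\lambda_{k+1}=\lambda_k$ by monotonicity of the eigenvalues. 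I would then extract from $V$ a single, degenerate test function: writing $w=\sum_{j=1}^{k+1}c_jv_j$, the $k-1$ orthogonality conditions $w\perp u_1,\dots,u_{k-1}$ together with the single condition $c_{j_0}=0$ for one fixed index $j_0$ form a homogeneous linear system of $k$ equations in the $k+1$ unknowns $c_1,\dots,c_{k+1}$, so it has a non-trivial solution. The corresponding $w$ is then non-zero (the $v_j$ being linearly independent), orthogonal to $u_1,\dots,u_{k-1}$, has Rayleigh quotient $\lambda_k$, and vanishes identically on the non-empty open set $\Omega_{j_0}$ because $c_{j_0}=0$.

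It remains to rule out the existence of such a $w$. Expanding $w$ in the orthonormal eigenbasis $(u_i)_i$ and using that $w$ is orthogonal to $u_1,\dots,u_{k-1}$ and has Rayleigh quotient $\lambda_k$, one sees that all components of $w$ along eigenfunctions with eigenvalue strictly larger than $\lambda_k$ must vanish; thus $w$ is a finite linear combination of eigenfunctions for the eigenvalue $\lambda_k$, and in particular $w$ is itself an eigenfunction of $-\Delta+m$ with eigenvalue $\lambda_k$. But a smooth eigenfunction on the connected manifold $M$ that vanishes on a non-empty open subset must vanish identically, by the unique continuation principle, which is available here since the potential $m$ is bounded. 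This contradicts $w\neq 0$, and therefore $\mu(u_k)\leq k$. I expect the two most delicate points of this argument to be, first, the justification that each truncation $v_j$ lies in $H^1_0(M)$ and that the integration by parts over $\Omega_j$ leaves no boundary term (this is exactly where the smoothness of $u_k$ and the inclusion $\partial\Omega_j\subset N(u_k)$ enter), and, second, the appeal to unique continuation, which is the only genuinely non-elementary analytic input.
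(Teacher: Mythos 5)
Your argument is correct: it is the classical Courant proof (truncation of $u_k$ to $k+1$ nodal domains, a linear combination orthogonal to $u_1,\dots,u_{k-1}$ with one coefficient forced to vanish, identification of that combination as a $\lambda_k$-eigenfunction via the Rayleigh quotient, and unique continuation to reach a contradiction), which is precisely the argument the paper invokes by citing Courant--Hilbert rather than reproving. The two points you flag as delicate --- membership of the truncations $v_j$ in $H^1_0(M)$ and the unique continuation step --- are indeed the only nontrivial analytic inputs, and both are standard here since $u_k$ is smooth and $m\in C^\infty(M)$.
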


\subsection{Critical points, Hartman-Wintner theorem and vanishing order}

The behaviour of solutions of functions $u$ solving $- \Delta u + mu = 0$ for $m \in C^\infty(M)$ near its zeros is well-understood:
An important concept are so-called \emph{critical points}:
\begin{definition}
\label{def:critical_point}
Let $u \in C^1(M)$.
The set of \emph{critical points in the nodal set} is 
\begin{equation}
    C(u) := \{x\in N(u) | \nabla u(x) =0\}.
\end{equation}
We also define the \emph{inner critical points in the nodal set} and the \emph{critical points on the boundary} as
\begin{equation}
    C_{\interior}(u) := C(u) \cap \interior M.
    \quad
    \text{and}
    \quad
    C_{\partial}(u) := C(u) \cap \partial M.
\end{equation}
\end{definition}
Note that for points on the boundary $\partial M$, the gradient of differentiable functions is meaningful there due to smoothness of the boundary.

\begin{remark}
    In the literature, e.g. in~\cite{Zeldich-09}, it is also common to use the name \emph{critical set}  for points where $\nabla u$ vanishes, but not necessarily $u$ itself, whereas points lying both in the nodal set and having vanishing gradient are referred to as \emph{singular points}.
    Since we are only interested in points of the nodal set, we have no need for this distinction and exclusively use the name \emph{critical points} as in Definition~\ref{def:critical_point}. 
\end{remark}

\begin{proposition}[Hartman-Wintner theorem on manifolds, cf. {\cite[Theorem 2.5]{Cheng-76}} for the case without boundary and $m = 0$] 
    \label{prop:Cheng}
    Let $M$ be a compact and connected Riemannian surface, let $m \in C^\infty(M)$, and let $u$ solve $- \Delta u + m u = 0$ with Dirichlet boundary conditions on $\partial M$.
    Then:
    \begin{enumerate}[(i)]
        \item 
        The sets $C_{\interior}(u)$ and $C_\partial(u)$ are finite.
        \item 
        The nodal set is a union of finitely many $C^2$-curves.
        These curves are either homeomorphic to the unit circle or they are homeomorphic to the unit interval and start and end at points of $C_\partial(u)$.
        \item 
        Whenever $m \geq 2$ of these curves meet at $z \in C_\interior(u)$, they locally form an equiangular system of $2m$ arcs, meeting at $z$ such that the outgoing tangent vectors have angles which are every multiple of $\pi /m$.
        \item 
        Whenever $m \geq 2$ of these curves meet at $z \in C_\partial(u)$, they locally form an equiangular system of $m+1$ arcs, meeting at $z$.
        Two arcs are contained in boundary $\partial M$ and the outgoing tangent vectors in direction of the other arcs are pointing into the surface.
        All outgoing tangent vectors along arcs at $z$ have angles which are multiples of $\pi/m$.
    \end{enumerate}
\end{proposition}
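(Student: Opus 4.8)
The plan is to reduce the whole statement to the classical Hartman--Wintner asymptotics for second-order elliptic equations in the plane and then to read off the local structure of $N(u)$ near each of its points, first in $\interior M$ and afterwards, via an odd-reflection argument, on $\partial M$. Concretely, I would fix $z_0 \in N(u)$ and choose isothermal coordinates near $z_0$, so that $g = \rho^2(\drm x^2 + \drm y^2)$ with $\rho$ smooth and positive and the equation becomes $\Delta_{\mathrm{flat}} u = \tilde m\, u$ on a disk about $0$, with $\tilde m := \rho^2 m$ smooth. Assuming $u \not\equiv 0$ (otherwise there is nothing to prove), the strong unique continuation property for such equations---equivalently, Bers--Vekua pseudoanalyticity of $\partial_z u$ in two dimensions---shows that $u$ vanishes at $0$ to a finite order $N \geq 1$, and the Hartman--Wintner theorem (cf.\ \cite[Theorem~2.5]{Cheng-76}) produces a nonzero homogeneous harmonic polynomial $P_N$ of degree $N$ on $\RR^2$ with
\[
    u(z) = P_N(z) + o(|z|^N), \qquad \nabla u(z) = \nabla P_N(z) + o(|z|^{N-1}) \quad (z \to 0).
\]
In two variables $P_N = \operatorname{Re}(c\, z^N) = |c|\, r^N \cos(N\theta - \alpha)$ for some $c \in \CC \setminus \{0\}$, so that $|\nabla P_N(z)| = N|c|\,|z|^{N-1}$ and $P_N^{-1}(0)$ is an equiangular union of $2N$ rays---that is, of $N$ lines---through the origin with consecutive angular spacing $\pi/N$.

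From this local input I would deduce (i)--(iii) in the interior. The gradient asymptotics give $|\nabla u(z)| \geq N|c|\,|z|^{N-1}(1+o(1)) > 0$ for $0 < |z| < \epsilon$, so every interior critical point (where necessarily $N \geq 2$) is isolated; as $C(u)$ is closed in the compact manifold $M$, this yields finiteness of $C_{\interior}(u)$ once $C_\partial(u)$ is controlled. On $N(u) \setminus C(u)$ one has $\nabla u \neq 0$, so by the implicit function theorem this set is a $C^\infty$ one-manifold, and its components cannot accumulate---neither at a regular nodal point (only one arc there) nor at a critical one (only $2N$ arcs nearby)---hence there are finitely many of them, which is the ``finitely many $C^2$-curves'' of (ii). Near an interior critical point $z_0$ I would pass to polar coordinates: since $\partial_\theta u = \partial_\theta P_N + o(r^N)$ and $\partial_\theta P_N$ does not vanish at the zeros of $P_N$, for each small $r>0$ the function $\theta \mapsto u(r,\theta)$ has exactly $2N$ simple zeros, depending $C^1$-smoothly on $r$ and converging as $r \to 0$ to the $2N$ directions of $P_N^{-1}(0)$; these trace $2N$ arcs issuing from $z_0$ that exhaust $N(u)$ near $z_0$, with outgoing tangent directions at every multiple of $\pi/N$ (this is (iii)) and pairing into $N$ curves passing \emph{through} $z_0$. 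Carrying one further term of the expansion promotes each such arc to $C^2$ up to and including $z_0$.

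For the boundary I would choose isothermal coordinates near a boundary point $z_0 \in N(u)$ in which $\partial M$ becomes $\{y = 0\}$ and, using the Dirichlet condition $u|_{y=0} = 0$, extend $u$ by odd reflection, $\tilde u(x,y) := -u(x,-y)$ for $y<0$. Then $\tilde u$ solves $\Delta_{\mathrm{flat}} \tilde u = M \tilde u$ on a full disk with $M(x,y) := \tilde m(x,|y|)$ Lipschitz, so $\tilde u$ is $C^{2,\alpha}$ by elliptic regularity and vanishes at $0$ to a finite order $\tilde N \geq 1$, and its leading harmonic polynomial, being odd in $y$, equals $b\, r^{\tilde N}\sin(\tilde N\theta)$ with $b \neq 0$. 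Applying the interior analysis to $\tilde u$ and restricting to the upper half-disk, $N(u)$ near $z_0$ consists of $\tilde N + 1$ arcs at the angles $k\pi/\tilde N$, $k = 0, \dots, \tilde N$, of which the two with $k \in \{0, \tilde N\}$ lie in $\partial M$ and the remaining $\tilde N - 1$ point into $\interior M$ and terminate at $z_0$; moreover $\nabla u(z_0) = 0$ forces $\tilde N \geq 2$, and $|\nabla \tilde u| > 0$ in a punctured half-disk shows that $z_0$ is an isolated boundary critical point and that interior critical points cannot accumulate at $\partial M$. Counting, the curves meeting at $z_0$ are the boundary contour that passes through together with the $\tilde N - 1$ interior arcs ending there---that is, $\tilde N$ curves and $\tilde N + 1$ arcs---which is exactly (iv). With both parts of (i) in hand, $C(u)$ is discrete and closed in the compact manifold $M$, hence finite; and since a maximal $C^2$ nodal curve continues through every regular nodal point and through every interior critical point, a non-closed one can terminate only at boundary critical points, which gives the circle-or-interval dichotomy of (ii).

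I expect the analytic heart---and the main obstacle---to be twofold: first, establishing the Hartman--Wintner expansion together with its gradient counterpart and, crucially, upgrading the $o(|z|^N)$ control to genuine $C^2$-regularity of the individual nodal arcs up to the critical point, which amounts to adapting \cite[Theorem~2.5]{Cheng-76} so as to accommodate a smooth potential $m$ and to carry one more term of the expansion; and second, verifying that the odd reflection yields an honest solution of an elliptic equation across $\{y=0\}$ with merely Lipschitz lower-order coefficient, so that the interior theory legitimately transfers to $\tilde u$ and thereby governs the boundary behaviour of $u$.
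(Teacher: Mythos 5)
Your proposal is correct and follows essentially the same route as the paper: the interior statements (i)--(iii) are reduced via local charts to the planar Hartman--Wintner/Bers asymptotics around zeros, and the boundary statement (iv) is obtained by antisymmetric reflection across $\partial M$, for which the Hartman--Wintner conclusion persists despite the loss of regularity (the paper invokes~\cite[Theorem~2.3]{HelfferHT-09} together with the strong unique continuation result of~\cite{KochT-09}), with finiteness in (i) then following from isolation of critical points and compactness. The only notable difference is your use of boundary-flattening isothermal coordinates, which keeps the principal part equal to the flat Laplacian after reflection so that only a Lipschitz zero-order coefficient remains, a mild simplification compared with the paper's symmetric reflection of the Lipschitz metric $g$ in a general chart.
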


Let us comment on the steps in the proof of Proposition \ref{prop:Cheng}.
Statement (i) follows from (ii) to (iv) in combination with compactness of $M$.
Statements (ii) and (iii) follow from the Hartman-Wintner theorem~\cite{HartmanW-53, HartmanW-54}, see also \cite{HelfferHT-09}, which characterizes the behaviour of eigenfunctions (and thus also of arcs of the nodal set) of elliptic equations with smooth coefficients in open subsets of $\RR^2$ around their zeros in $\interior (M)$: 
Locally, such solutions behave like a harmonic polynomial, with a zero set consisting of a finite number of arcs, forming an equiangular system.
Since coordinate charts will locally map the equation $-\Delta u + V u = 0$ to such an elliptic equation, this settles items (ii) and (iii) of Proposition~\ref{prop:Cheng}, cf. also \cite{Bers-55} or \cite[Theorem 2.1]{Cheng-76} for a formulation appropriate in our context.

As for (iv) about points on the boundary $\partial M$, one proceeds as in~\cite[Theorem 2.3]{HelfferHT-09}:
Coordinate charts will map a neighbourhood of a boundary point to a half disk on which the equation becomes an elliptic equation of the form
    \[
    \nabla g \nabla u + V u = 0  
    \]
    with $g,V$ and $u$ smooth up to the boundary and $u$ satisfying Dirichlet boundary conditions there.
    Using a symmetric reflection of $g$, and antisymmetric reflections of $V$ and $u$, this becomes a solution of an elliptic equation on a disk, however with some loss of regularity:
    The function $u$ is now merely locally in the $H^1$ Sobolev space, the matrix-valued function $g$ is Lipschitz continuous, whereas $V$ will have a jump at the boundary of the half-disk and we can only know that it is in $L^\infty$.
    However, for this extended function $u$, the statement of the Hartman-Wintner theorem still holds, cf.~\cite[Theorem 2.3]{HelfferHT-09}, where the statement is proved for the operator $- \Delta  + V$ on domains $\Omega \subset (\RR^2)$ with $V \in L^\infty(\Omega)$.

    More precisely, the core argument in the proof of the Hartman-Wintner theorem is that solution of certain elliptic equations cannot vanish to arbitrarily high order, a statement which is nowadays known as \emph{strong unique continuation}.
    A strong unique continuation statement, sufficient for our situation ($g$ Lipschitz continuous, $V$ bounded) can be found in~\cite[Theorem 1]{KochT-09}.

\subsection{Graph structure of nodal sets}

We now make the intuition precise that nodal sets of eigenfunctions have a graph-like structure embedded in $M$ where critial points will take the role of \emph{vertices}:
The following statement is a consequence of Proposition \ref{prop:Cheng}

\begin{corollary}
    \label{cor:graph}
    Let $M$ be a compact, connected Riemannian surface, let $m\in C^\infty(M)$, and let $u$ solve $- \Delta u + m u = 0$ with Dirichlet boundary conditions on $\partial M$.
    Then, the nodal set $N(u)$ consists of a finite number of connected components.
    Each such connected component is either homeomorphic to a circle or it is homeomorphic to a finite union of closed arcs each of which is homeomorphic to the unit interval connecting only two points of $C(u)$.
    
    Every point in $C_\interior(u)$ is end point of an even number and at least four arcs, where arcs beginning and ending at the same point will be counted twice.
    
    Every point in $C_\partial(u)$ is end point of at least three arcs, where arcs beginning and ending at the same point will be counted twice.
\end{corollary}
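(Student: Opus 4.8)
The plan is to deduce the corollary directly from Proposition~\ref{prop:Cheng} by a sequence of more or less routine topological reductions, the only genuine content being a careful bookkeeping of how the local arc-counting statements (iii) and (iv) translate into statements about the whole nodal set.

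\smallskip

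\textbf{Step 1: finiteness and the global decomposition.}
First I would invoke Proposition~\ref{prop:Cheng}(i)--(ii): $N(u)$ is a union of finitely many $C^2$-curves, each either a topological circle or a topological arc with endpoints in $C_\partial(u)$, and both $C_\interior(u)$ and $C_\partial(u)$ are finite. Since $M$ is compact, each such curve is compact, and a compact set meeting the finite set $C(u)$ in only finitely many points is cut by $C(u)$ into finitely many ``elementary'' sub-arcs whose interiors are disjoint from $C(u)$. Hence $N(u)$ is a finite union of such elementary arcs together with some ``free'' circles not meeting $C(u)$ at all; each connected component of $N(u)$ is a finite union of such pieces, glued only at points of $C(u)$, which is precisely the claimed description (circle, or union of arcs joining points of $C(u)$). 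Here I should be slightly careful: an elementary arc could in principle be a loop returning to the same point of $C(u)$, which is why the statement counts such arcs twice at that vertex; I would make this convention explicit.

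\smallskip

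\textbf{Step 2: counting arcs at an interior critical point.}
Fix $z \in C_\interior(u)$. By Proposition~\ref{prop:Cheng}(iii), in a small coordinate neighbourhood the nodal set near $z$ is exactly an equiangular star of $2m$ arcs with $m \geq 2$. Shrinking the neighbourhood if necessary so that it contains no other point of $C(u)$, each of these $2m$ local half-arcs extends to an elementary arc of $N(u)$ with an endpoint at $z$; an elementary arc both of whose ends are among these half-arcs is a loop at $z$ and accounts for two of the $2m$ half-arcs, consistent with the ``count twice'' convention. Thus $z$ is the endpoint of exactly $2m$ arcs counted with this convention, an even number which is at least $4$ since $m \geq 2$.

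\smallskip

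\textbf{Step 3: counting arcs at a boundary critical point, and the obstacle.}
For $z \in C_\partial(u)$, Proposition~\ref{prop:Cheng}(iv) gives an equiangular system of $m+1$ arcs with $m \geq 2$, of which two lie along $\partial M$ and $m - 1$ point into the interior. Counting as before, $z$ is an endpoint of exactly $m+1$ arcs (with loops counted twice), and $m + 1 \geq 3$. The main subtlety here — and the step I expect to require the most care — is the role of the boundary arcs: one must remember that we declared $\partial M \subset N(u)$, so the portions of $\partial M$ adjacent to $z$ are themselves (parts of) nodal arcs, and a boundary contour not containing any point of $C_\partial(u)$ is one of the ``free circle'' components of $N(u)$. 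I would therefore spell out that, by Proposition~\ref{prop:Cheng}(ii)+(iv), $C_\partial(u)$ divides each boundary contour into elementary arcs exactly as in the interior case, so that the two boundary half-arcs at $z$ are genuine elementary arcs of $N(u)$, and the bound ``at least three'' follows. Finally I would remark that when there are no critical points at all (so $N(u)$ is empty in the interior and $\partial M$ consists of free circles), the statement is vacuously true, which closes the argument.
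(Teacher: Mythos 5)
Your argument is correct and follows the same route as the paper, which states Corollary~\ref{cor:graph} as a direct consequence of Proposition~\ref{prop:Cheng} without giving further details; your three steps supply exactly the bookkeeping the paper leaves implicit (cutting the finitely many nodal curves at the finite set $C(u)$ into elementary arcs, then reading off the local counts $2m \ge 4$ at interior critical points and $m+1 \ge 3$ at boundary critical points from items (iii) and (iv), with the loop-counted-twice convention). One small inaccuracy in your closing remark: the absence of critical points does not make the interior nodal set empty---it only forces every component of $N(u)$ to be a circle, a case your Step~1 already covers, so nothing is lost.
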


In other words, we have identified the nodal set as a union of a finite number of circles and \emph{compact metric graphs}, embedded on the surface $M$.

\begin{definition}
    A \emph{compact metric graph} $\mathcal{G}$ is a disjoint union of intervals -- called \emph{edges} --, topologically joined at their end points -- called \emph{vertices} -- according to the structure of a combinatorial graph $G$, see \cite{Mugnolo-19} for details.

    The \emph{degree} of a vertex in metric graph is the number of edges incident to it, where loops (edges starting and ending at the same vertex) are counted twice.
\end{definition}

We are going to be interested in \emph{embedded graphs} in $M$:
\begin{definition}
Let $M$ be a compact and connected Riemannian surface.
A connected \emph{embedded graph} $G \subset M$ is a metric graph drawn on $M$ such that edges only intersect at their end points (that is at the vertices).
The connected components of $M \setminus G$, when viewed as subsets of $M$, are called the \emph{faces} of $G$.
An embedded graph $G \subset M$ is \emph{cellularly embedded} if every face is homeomorphic to a disc. 
\end{definition}

\begin{remark}
    Let us emphasize that 
    \begin{enumerate}[(i)]
        \item 
        The nodal set does not need to be connected.
        \item 
        Some components of the nodal set can be \emph{trivial}, that is homeomorphic to circles without containing critical points. 
        In order to formally turn the nodal set into an embedded metric graph, we will therefore introduce additional \emph{dummy vertices} on these components.
        \item 
        Every vertex in the metric graph, associated to a nontrivial component of the nodal set, will have degree at least four if it corresponds to a point in $C_\interior(u)$ or degree at least three if it corresponds to a point in $C_\partial(u)$.
    \end{enumerate}
\end{remark}

\begin{remark}
    It is common to consider the nodal set in the language of two-dimensional, finite \emph{CW complexes}, cf.~for instance \cite{Whitehead-49, EllisMonaghanM-13}, the appropriate framework for Euler's formula which we are going to use below.
    However, a priori, critical points, the nodal arcs and the nodal domains do \emph{not} necessarily form two-dimensional CW complexes:
    
    Firstly, faces of embedded metric graphs might not be connected.
    Worse, even the nodal set itself might be disconnected and not even a bona fide metric graph because it might have connected components, homeomorphic to circles which contain no vertices at all.
    Of course, this latter can be remedied by the introduction of additional \emph{dummy vertices} and connectedness can be achieved by insertion of additional edges, cf.~Proposition~\ref{prop:improvedeulersinequality} below.
    We also emphasize that \emph{cellularly embedded} metric graphs in $M$ with at least one vertex will indeed be CW complexes.

\end{remark}

We summarize:

\begin{lemma}
    \label{lem:adding_dummy_vertices}
    Let $M$ be a compact and connected Riemannian surface, let $m \in C^\infty(M)$, and let $u$ solve $- \Delta u + m u = 0$ with Dirichlet boundary conditions on $\partial M$. 
    Assume that the nodal set $N(u)$ is nonempty with at least one critical point and $n(u)$ many connected components.
    Then, possibly adding at most $n(u) - 1$ many additional vertices, the nodal set forms a (possibly disconnected) metric graph, embedded in $M$.
    Vertices are given by critical points of $u$ and at most $n(u) - 1$ many dummy vertices on connected components of $N(u)$ if they are homeomorphic to circles.
    Edges are given by nodal arcs connecting vertices.
\end{lemma}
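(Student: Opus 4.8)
The plan is to assemble Lemma~\ref{lem:adding_dummy_vertices} directly from the structural information already provided by Proposition~\ref{prop:Cheng} and Corollary~\ref{cor:graph}, so the ``proof'' is essentially a bookkeeping argument together with a connectivity-of-components count. First I would invoke Corollary~\ref{cor:graph} to list the connected components of $N(u)$: there are finitely many, say $n(u)$ of them, and each is either (a) homeomorphic to a circle with no critical point on it, (b) a union of finitely many closed arcs whose endpoints lie in $C(u)$, or (the degenerate boundary case) a contour of $\partial M$, which by Proposition~\ref{prop:Cheng}(ii)–(iv) is itself a union of arcs joining boundary critical points unless it carries no critical point, in which case it is of type (a). By hypothesis $N(u)$ contains at least one critical point, so at least one component is of type (b).

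Next I would turn each component into a piece of a metric graph. For a type-(b) component, Corollary~\ref{cor:graph} already gives the data: vertices are the points of $C(u)$ lying on it, edges are the nodal arcs, each arc is homeomorphic to the unit interval and joins exactly two (not necessarily distinct) vertices, and loops are allowed; the degree bounds ($\geq 4$ at interior critical points, $\geq 3$ at boundary critical points, loops counted twice) are exactly items three and four of Corollary~\ref{cor:graph} together with Proposition~\ref{prop:Cheng}(iii)–(iv). For a type-(a) component $\gamma$, which is homeomorphic to a circle with no critical point, I would insert a single \emph{dummy vertex} $p_\gamma \in \gamma$; this cuts $\gamma$ into one edge (a loop at $p_\gamma$), turning it into a legitimate one-vertex, one-edge metric graph. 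This costs one extra vertex per type-(a) component.

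The only remaining point is the count $n(u)-1$ on the number of dummy vertices. A priori there could be up to $n(u)$ components of type (a), which would naively need $n(u)$ dummy vertices; but the hypothesis guarantees at least one component is of type (b) and hence already carries a genuine (critical-point) vertex, so at most $n(u)-1$ components are of type (a), giving at most $n(u)-1$ dummy vertices. Collecting everything, the disjoint union over all components of the metric-graph pieces just constructed is a metric graph $\mathcal G$, embedded in $M$ because the nodal arcs of a solution of $-\Delta u + mu = 0$ are $C^2$ curves meeting only at points of $C(u)$ (Proposition~\ref{prop:Cheng}(ii)), so distinct edges meet only at shared vertices. This $\mathcal G$ need not be connected, matching the statement.

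I do not expect a genuine obstacle here, since the substantive analytic content is entirely in Proposition~\ref{prop:Cheng} and Corollary~\ref{cor:graph}; the mild subtlety worth stating carefully is the interaction with the boundary, namely that a boundary contour of $\partial M$ containing no critical point must itself be treated as a type-(a) circle component and receive a dummy vertex (otherwise it would be an edgeless, vertexless face boundary), while a contour containing at least one boundary critical point is already split into arcs by Proposition~\ref{prop:Cheng}(iv). Handling that case explicitly is the one place where the write-up needs care, but it changes nothing in the final count.
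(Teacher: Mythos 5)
Your proposal is correct and follows essentially the same route as the paper: the lemma is presented there as a direct summary of Proposition~\ref{prop:Cheng} and Corollary~\ref{cor:graph}, with circle components of $N(u)$ receiving one dummy vertex each and the bound $n(u)-1$ coming exactly from the hypothesis that at least one component carries a critical point. Your explicit treatment of critical-point-free boundary contours as circle components is a fine point the paper leaves implicit, but it does not change the argument.
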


\begin{definition}
    The embedded metric graph $\mathcal{G}_u \subset M$, obtained by adding vertices on all connected components of $N(u)$ as in Lemma~\ref{lem:adding_dummy_vertices} which are homeomorphic to circles is called the \emph{nodal graph}, corresponding to $u$.
\end{definition}

\subsection{Vanishing order}

By Proposition \ref{prop:Cheng}, at every critical point $z$, a finite number of nodal arcs meet at $z$.
We now relate this number to the \emph{order of vanishing}.

\begin{definition}
	Let $M$ be a Riemannian surface, $z \in M$ and $u \in C^\infty(M)$. 
	The \emph{order of vanishing} of $u$ at $z$ is 
	\[
	\Gamma_u(z)
	:=
	\max
	\left\{
		n \in \NN
		\colon
		\partial^\alpha u(z) = 0\
		\text{for all $\alpha \in \NN^d$ with $\lvert \alpha \rvert \leq N$}.
	\right\}
	\]
\end{definition}

\begin{lemma}
    \label{lem:vanishing_order}
	Let $M$ be a compact and connected Riemannian surface, let $m \in C^\infty(M)$ and let $u$ solve $- \Delta u + m u = 0$ with Dirichlet boundary conditions on $\partial M$.
    \begin{enumerate}[(i)]
        \item
        If $z \in N(u) \backslash C(u)$, then the order of vanishing of $u$ at $z$ is one.
        \item
        If $z \in C_\interior(u)$, then the order of vanishing of $u$ at $z$ is equal to $n/2$, where $n$ is the number of arcs of the nodal set, meeting at $u$.
        \item 
         If $z \in C_\partial(u)$, then the order of vanishing of $u$ at $z$ is equal to $n-1$, where $n$ is the number of arcs of the nodal set, meeting at $u$ (where we recall that two arcs, meeting in $z$ are contained in the boundary). 
        
    \end{enumerate}
        In particular at points of the critical set, the order of vanishing is at least $2$.
\end{lemma}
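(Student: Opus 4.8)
The plan is to extract the order of vanishing directly from the local normal form supplied by the Hartman--Wintner theorem (Proposition~\ref{prop:Cheng}), by counting the nodal rays of the leading homogeneous polynomial in a suitable chart.

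Statement (i) is essentially a restatement of the definitions: if $z \in N(u) \setminus C(u)$, then $u(z) = 0$ while $\nabla u(z) \neq 0$, so the lowest-degree nonvanishing term of the Taylor expansion of $u$ at $z$ has degree one, i.e.\ $\Gamma_u(z) = 1$.

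For (ii) I would fix $z \in C_\interior(u)$ and work in a coordinate chart centred at $z$ in which $-\Delta u + mu = 0$ becomes an elliptic equation with smooth coefficients on a Euclidean disc. The Hartman--Wintner/Bers theorem underlying Proposition~\ref{prop:Cheng}(ii)--(iii) gives, in these coordinates,
\[
  u(x) = P_N(x) + O(|x|^{N+1}), \qquad N := \Gamma_u(z),
\]
with $P_N \not\equiv 0$ a homogeneous polynomial of degree $N$ lying in the kernel of the principal part of the equation frozen at $z$; after a linear change of coordinates we may take $P_N$ to be harmonic, and the nodal arcs of $u$ issuing from $z$ are $C^2$-curves tangent at $z$ to the nodal rays of $P_N$. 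A homogeneous harmonic polynomial of degree $N$ in two variables is, in polar coordinates, a nonzero multiple of $r^N\cos\!\big(N(\theta-\theta_0)\big)$, whose zero set consists of exactly $2N$ equiangular rays through the origin. Hence the number $n$ of nodal arcs at $z$ equals $2N$, i.e.\ $\Gamma_u(z) = n/2$.

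For (iii) I would fix $z \in C_\partial(u)$ and invoke the reflection procedure recalled after Proposition~\ref{prop:Cheng}: straightening the boundary near $z$ to $\{x_2 = 0\}$ and reflecting $g$ symmetrically and $m,u$ antisymmetrically produces a function $\tilde u$, locally $H^1$ on a full disc, which solves an elliptic equation with Lipschitz leading coefficient and bounded potential and satisfies $\tilde u(x_1,-x_2) = -\tilde u(x_1,x_2)$. The strong unique continuation input cited there still yields, after a linear normalisation at $z$, an expansion
\[
  \tilde u(x) = \tilde P(x) + O(|x|^{\tilde N + 1}),
\]
with $\tilde P \not\equiv 0$ harmonic and homogeneous of some degree $\tilde N$, and odd in $x_2$ by antisymmetry. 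A nonzero polynomial cannot vanish on the half-plane $\{x_2 > 0\}$, so the restriction of $\tilde P$ there is nonzero and coincides with the leading Taylor term of $u$ at $z$; thus $\Gamma_u(z) = \tilde N$. Being odd in $x_2$, the $2\tilde N$ equiangular nodal rays of $\tilde P$ include the two rays making up $\{x_2 = 0\}$, and the remaining $2\tilde N - 2$ come in pairs symmetric about that line, so exactly $\tilde N - 1$ of them point into $\{x_2 > 0\}$. By Proposition~\ref{prop:Cheng}(iv) the $n$ nodal arcs of $u$ at $z$ are the two boundary arcs together with these $\tilde N - 1$ arcs entering the surface, whence $n = \tilde N + 1$ and $\Gamma_u(z) = n - 1$. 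Finally, the ``in particular'' clause follows from Corollary~\ref{cor:graph}: an interior critical point is an endpoint of $n \geq 4$ arcs, so $\Gamma_u(z) = n/2 \geq 2$; a boundary critical point is an endpoint of $n \geq 3$ arcs, so $\Gamma_u(z) = n - 1 \geq 2$.

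The step I expect to be the main obstacle is the boundary case (iii): one must carefully verify that the lower-regularity Hartman--Wintner statement indeed yields a \emph{harmonic, homogeneous} leading term despite the jump of the potential across $\{x_2 = 0\}$, that passing to the reflection $\tilde u$ does not alter the order of vanishing of $u$ at $z$, and that the ray-counting for $\tilde P$ (two rays of which lie along $\partial M$) matches the arc-counting for $u$ provided by Proposition~\ref{prop:Cheng}(iv). The interior case (ii) is then routine once the normal form is in place.
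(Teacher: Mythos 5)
Your proposal is correct and follows essentially the same route as the paper, which disposes of this lemma by appealing to the planar Hartman--Wintner/Bers local expansion (as in Cheng's proof of Proposition~\ref{prop:Cheng}) together with the reflection argument at the boundary sketched after that proposition; you simply carry out the ray-counting for the leading harmonic polynomial explicitly. The boundary subtleties you flag (lower regularity after reflection, oddness of the leading term, matching rays of $\tilde P$ with the arcs in Proposition~\ref{prop:Cheng}(iv)) are exactly the points the paper delegates to \cite{HelfferHT-09} and \cite{KochT-09}, so there is no gap beyond what the paper itself leaves to the cited references.
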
    

Lemma \ref{lem:vanishing_order} is again a consequence of the planar Hartman-Wintner theorem in combination with the Bers scaling theorem, similar to the arguments used by Cheng in the proof of Proposition \ref{prop:Cheng}.
The notion of vanishing order is intimately connected with a similar definition in terms of the asymptotic behaviour of averages of balls, that is
\[
	\frac{\int_{B_r(z)} \lvert u(x) \rvert^p \mathrm{d} x}{r^d \int_M \lvert u(x) \rvert^p \mathrm{d} x}
\]
as $r$ tends to $0$, see~\cite{GermainMZ-24} and references therein.
The concept of vanishing order is of importance in several contexts, including doubling inequalities, unique continuation, and observability, see~\cite{DonnellyF-90, Zeldich-09, GermainMZ-24}.
In this sense, points with high vanishing order -- namely critical points -- are exactly those where eigenfunctions exhibit their most extremal behaviour.

\subsection{Euler inequality for embedded graphs}  

The well-known Euler formula for planar connected graphs
\begin{equation*}
 V  - E + F = 2,
\end{equation*}
where $V$ is the number of vertices, $E$ is the number of edges and $F$ is the number of faces can be extended to any embedded graphs via the notion of \emph{Euler-Poincar\'e characteristic} $\chi(M)$, an invariant of compact and connected Riemannian surfaces.  

\begin{proposition}[Euler-Poincar\'e characteristic and Euler formula for cellularly embedded graphs]
    Let $M$ be a compact and connected Riemannian surface. 
    Then, there is $\chi(M)$ such that for all cellularly embedded metric graphs $G \subset M$  with $V$ vertices, $E$ edges, partitioning $M \backslash G$ into $F$ faces, we have
    \[
    V - E + F = \chi(M).
    \]
\end{proposition}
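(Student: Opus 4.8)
I would first fix the meaning of $\chi(M)$ intrinsically, independently of any graph: set $\chi(M) := \sum_{i \ge 0} (-1)^i \operatorname{rank} H_i(M;\ZZ)$, equivalently the alternating face count of any triangulation of $M$ (one exists, by Rad\'o's theorem, since $M$ is a compact surface). The entire content of the proposition is then the assertion that \emph{every} cellularly embedded metric graph $G \subset M$ (nonempty, with at least one vertex) recovers this number through $V - E + F$. The plan is to exhibit the cell structure carried by $G$ and invoke homological invariance.

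\emph{Step 1: a cellular embedding is a CW decomposition of $M$.} Given such a $G$, take the $0$-cells to be its vertices, the $1$-cells to be its open edges, and the $2$-cells to be its faces, each homeomorphic to an open $2$-disc by hypothesis. Since an open $2$-disc is a manifold without boundary, a point of $\partial M$ lying in a face would be a manifold-boundary point of that face (because the face, being open in $M$, inherits the half-disc neighbourhoods of such a point), which is impossible by invariance of domain; hence $\partial M \subseteq G$ automatically, and the presence of a boundary creates no extra complication. To see that these cells assemble into a genuine finite $2$-dimensional CW complex with underlying space $M$, I would use the classical local model for faces of cellular embeddings: the closure of each face is the image of a polygon (a closed disc with a finite boundary subdivision) under a continuous map that is a homeomorphism on the interior and sends the boundary onto a closed walk in $G$, possibly repeating vertices and edges. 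This is where the tameness of the arcs (piecewise $C^2$) and the local $2$-manifold structure of $M$ enter, and it is precisely the CW-complex description recorded in \cite{Whitehead-49, EllisMonaghanM-13}.

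\emph{Step 2: the alternating cell count is a topological invariant.} For any finite CW complex $X$ the cellular chain groups are free abelian of ranks equal to the numbers of cells in each dimension, and for a bounded complex of finitely generated free abelian groups the alternating sum of the ranks of the groups equals the alternating sum of the ranks of its homology groups (a telescoping application of the rank identity $\operatorname{rank}C_i = \operatorname{rank}\ker\partial_i + \operatorname{rank}\operatorname{im}\partial_i$). Thus
\[
V - E + F \;=\; \sum_{i \ge 0} (-1)^i \operatorname{rank} H_i(M;\ZZ) \;=\; \chi(M),
\]
where the left-hand side is read off from the CW structure of Step~1 and the right-hand side depends only on the homeomorphism (indeed homotopy) type of $M$, not on $G$. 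This proves the claim.

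\emph{Main obstacle, and an alternative.} The conceptual step (homological invariance) is standard; the genuine work is the rigorous verification in Step~1 that a cellularly embedded metric graph truly yields a CW decomposition of $M$ — producing the attaching maps of the $2$-cells and accommodating loops, parallel edges, and faces whose boundary walk traverses an edge twice. If one prefers to avoid CW machinery, an alternative is to fix a reference triangulation $\tau$ of $M$, define $\chi(M)$ as its alternating face count, and show directly that $V - E + F$ is invariant under the two elementary refinement moves — inserting a degree-two vertex in the interior of an edge ($V$ and $E$ both increase by one) and drawing a new edge across a face so as to split it into two discs ($E$ and $F$ both increase by one) — after which one connects $G$ and $\tau$ through a common refinement obtained by overlaying the two embeddings in general position. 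The difficulty then simply migrates: one must check that this overlay is again a cellular embedding and that the common refinement is reachable by the elementary moves, which is a point-set argument of the same flavour as Step~1.
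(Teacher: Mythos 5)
Your argument is correct and is essentially the route the paper itself (implicitly) relies on: it states this classical formula without proof, pointing to the CW-complex framework of \cite{Whitehead-49, EllisMonaghanM-13}, which is exactly your Step~1 (a cellular embedding yields a finite CW decomposition of $M$, with your observation that cellularity forces $\partial M \subseteq G$) combined with your Step~2 (homological invariance of the alternating cell count). Nothing further is needed beyond the standard references you already cite for the attaching-map details.
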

In particular this justifies to define $\chi(M)$ as an invariant of the surface $M$.
On closed surfaces (i.e. without boundary) the Euler-Poincar\'e number together with the number of contours and the question of orientability or non-orientability uniquely determine the surface up to homeomorphism, see e.g.~\cite[Theorem~6.1]{GallierXu-13} and all compact surfaces have been identified~\cite{Brahana-21, Jordan-1866, 
Moebius-1861}.
In particular, for closed surfaces (that is without boundary), one has:
\begin{proposition}[Classification theorem for compact surfaces]
    \label{prop:classification_theorem}
    Let $M$ be a closed surface.
    \begin{enumerate}[(i)]
    \item 
    If $M$ is orientable, it is homeomorphic to either the sphere, or the connected sum of $g$ tori for some $g \in \{1,2,3 \dots \}$.
    \item
    If $M$ is non-orientable, it is homeomorphic to 
either to a projective plane, or a Klein bottle, or the connected sum of a projective plane or a Klein bottle with some tori.
    \end{enumerate}
\end{proposition}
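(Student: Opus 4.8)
The statement is a classical result, so the plan is to follow the standard combinatorial route via triangulations and normal forms of polygonal presentations, as in~\cite{GallierXu-13}. First, one invokes Rad\'o's theorem: every compact surface admits a finite triangulation, so the closed surface $M$ is homeomorphic to a finite two-dimensional simplicial complex in which every edge belongs to exactly two triangles and the triangles incident to each vertex form a single cyclic chain. This existence-of-a-triangulation step is the genuine topological input and the part I expect to be the main obstacle; I would cite it rather than reprove it, since everything afterwards is finite bookkeeping.

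Second, from the triangulation one builds a \emph{polygonal presentation}. Order the triangles $T_1, \dots, T_n$ so that each $T_{i+1}$ meets $T_1 \cup \dots \cup T_i$ along at least one edge, and glue successively along such edges. The result is a single polygon $P$ homeomorphic to a disc whose boundary edges are identified in pairs; reading off the boundary yields an \emph{edge word} $w$ in which each letter occurs exactly twice, either in the form $x \cdots x$ (a \emph{crosscap} pair) or in the form $x \cdots x^{-1}$ (a \emph{handle-type} pair).

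Third, one reduces $w$ to a normal form by the classical elementary ``cut-and-paste'' transformations, each of which leaves the homeomorphism type of the quotient unchanged: cancelling an adjacent pair $x x^{-1}$ when the word has length $>2$; bringing each crosscap pair into the adjacent position $x x$; collecting the interlocking handle-type pairs into commutator blocks $a b a^{-1} b^{-1}$; and, crucially, Dyck's relation $a a\, b b\, c c \cong a a\, b c b^{-1} c^{-1}$, which lets one trade a handle for two crosscaps whenever at least one crosscap is already present. After finitely many such moves $w$ becomes one of three normal forms: $x x^{-1}$ (the sphere); $a_1 b_1 a_1^{-1} b_1^{-1} \cdots a_g b_g a_g^{-1} b_g^{-1}$ with $g \geq 1$; or $a_1 a_1 \cdots a_k a_k$ with $k \geq 1$.

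Finally, one reads off the geometric meaning of the normal forms. The word $x x^{-1}$ gives the sphere and $a_1 b_1 a_1^{-1} b_1^{-1} \cdots a_g b_g a_g^{-1} b_g^{-1}$ is precisely the connected sum of $g$ tori, which is case (i). For case (ii), $a_1 a_1$ is the projective plane, $a_1 a_1 a_2 a_2$ is the Klein bottle, and $a_1 a_1 \cdots a_k a_k$ is the connected sum of $k$ projective planes; distinguishing the parity of $k$ and applying Dyck's relation in the opposite direction rewrites this word as the connected sum of a projective plane with $(k-1)/2$ tori when $k$ is odd, and as the connected sum of a Klein bottle with $(k-2)/2$ tori when $k$ is even, which is exactly the list in (ii). Optionally — though not needed for the statement itself — one records that these models are pairwise non-homeomorphic: orientability is detected by the presence or absence of a crosscap pair in the reduced word, and within each class the Euler characteristic ($2-2g$ in the orientable case, $2-k$ in the non-orientable case), computed from any cellular embedding as in the preceding proposition, separates the remaining cases.
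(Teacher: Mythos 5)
The paper itself gives no proof of this proposition --- it is quoted as the classical classification theorem with pointers to Gallier--Xu, Brahana, Jordan and M\"obius --- and your sketch reproduces precisely the standard combinatorial argument from those sources (triangulation via Rad\'o's theorem, assembly of a polygonal presentation, cut-and-paste reduction of the edge word to normal form, Dyck's relation to trade handles for crosscaps and back), so it is essentially the same approach as the one the paper relies on. The only step you gloss over is the usual vertex-identification reduction (arranging, after cancelling adjacent pairs $xx^{-1}$, that all vertices of the polygon are identified to a single point), which is what guarantees that every handle-type pair is linked with another and can be gathered into commutator blocks; with that standard insertion the argument is correct, and your reading of the non-orientable normal forms as projective plane or Klein bottle summed with tori matches the statement's case (ii).
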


In particular, on connected and closed Riemannian surfaces, the Euler-Poincar\'e characteristic is equivalent to another invariant $g (M)$, called \emph{genus} via
\[
    \chi(M) = 2 - 2 g
    \quad
    \text{if $M$ is orientable},
\]
and
\[
    \chi(M) = 2 - g
    \quad
    \text{if $M$ is non-orientable}.
\]
On orientable, closed and connected Riemannian surfaces, the genus intuitively describes the "number of holes", which means that every orientable, connected and closed Riemannian surface is homeomorphic to a manifold $S_g$, where $S_0$ denotes the sphere and $S_g$, $g \geq 1$ denotes the connected sum of $g$ tori, that is "a torus with $g$ holes", see Figure~\ref{fig:surfaces}.

\begin{figure}[ht]
        \begin{tikzpicture}
            \begin{scope}[xshift = 0cm]
                \draw[thick] (0,0) circle (1cm);
                \draw (0,-1.25) node {$S_0$};
            \end{scope}
            
            \begin{scope}[xshift = 3cm]
                \draw[thick] (0,0) ellipse (1.5cm and 1cm);
                \draw (0,-1.25) node {$S_1$};
                \begin{scope}[yshift = .1cm]
                    \draw[thick, rounded corners = 10pt] (-.5,-.1) -- (0,-.3) -- (.5,-.1);
                    \draw[thick, rounded corners = 10pt] (-.32,-.15) -- (0,.1) -- (.32,-.15);
                \end{scope}
            \end{scope}

            \begin{scope}[xshift = 7cm]
                \draw[thick, rounded corners = 18pt] 
                (-2,0) -- (-1,1.15) -- (0,.5) -- (1,1.15) -- (2,0) -- (1,-1.15) -- (0,-.5) -- (-1,-1.15) -- cycle;
                \begin{scope}[xshift = -.7cm, yshift = .1cm]
                    \draw[thick, rounded corners = 10pt] (-.4,-.1) -- (0,-.35) -- (.4,-.1);
                    \draw[thick, rounded corners = 10pt] (-.24,-.18) -- (0,.1) -- (.24,-.18);
                \end{scope}
                \begin{scope}[xshift = .7cm, yshift = .1cm]
                    \draw[thick, rounded corners = 10pt] (-.4,-.1) -- (0,-.35) -- (.4,-.1);
                    \draw[thick, rounded corners = 10pt] (-.24,-.18) -- (0,.1) -- (.24,-.18);
                \end{scope}
                \draw (0,-1.25) node {$S_2$};
            \end{scope}

            \begin{scope}[xshift = 10cm]
                    \draw (0,0) node {$\dots$};
            \end{scope}

        \end{tikzpicture}
    \caption{Closed orientable surfaces. The number $g$ denotes the genus.}
    \label{fig:surfaces}
\end{figure}

A consequence is:
\begin{proposition}[Euler inequality for embedded graphs]
    \label{prop:Euler_inequality}
    Let $M$ be a compact and connected surface and $G \subset M$ an embedded metric graph with $V$ vertices, $E$ edges, partitioning $M \backslash G$ into $F$ faces.
    Then
    \[
    V - E + F \geq \chi(M)
    \]
    with equality if and only if $G$ is cellulary embedded.
\end{proposition}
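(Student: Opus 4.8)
The statement to prove is Proposition~\ref{prop:Euler_inequality}: for an embedded metric graph $G\subset M$ with $V$ vertices, $E$ edges and $F$ faces, one has $V-E+F\geq\chi(M)$, with equality iff $G$ is cellularly embedded. The plan is to reduce to the cellularly embedded case (where $V-E+F=\chi(M)$ holds by the preceding proposition) by successively modifying $G$ without decreasing the quantity $V-E+F$ until every face is a disc. First I would dispose of trivialities: if $G$ is already cellularly embedded, equality holds and there is nothing to prove; if $G$ has no vertices it is either empty or a disjoint union of circles, and in that degenerate situation one can add a dummy vertex on each circle so that $V$ and $E$ both increase by the number of circles, leaving $V-E+F$ unchanged and reducing to the case of a genuine combinatorial graph.

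Next I would handle disconnectedness of $G$. If $G$ has $c\geq 2$ connected components, pick two components lying on the boundary of a common face and join them by an arc drawn inside that face (possible since the face is connected); this adds one edge and either zero vertices — if we attach the arc to existing vertices — and does not create a new face (we have merely split one face into a face still homeomorphic to what remains, or rather we subdivide that face into exactly one fewer connected piece… care is needed here), so $V-E+F$ changes by $-1$ in $E$ and must be compensated. The cleaner bookkeeping: joining two components across a face \emph{merges} nothing but does \emph{split} the face into two, so $E$ increases by $1$ and $F$ increases by $1$, leaving $V-E+F$ unchanged, while $c$ decreases by $1$. Iterating, we reduce to $G$ connected with the same value of $V-E+F$.

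With $G$ connected and having at least one vertex, the remaining task is to make every face a disc. A face of a connected embedded graph on $M$ is an open subsurface of $M$; if it is not a disc, its closure has positive genus or more than one boundary circle or carries a handle, and in each case there is a simple closed arc inside that face, starting and ending on $G$ (on the boundary of the face), which is \emph{non-contractible within the face} — i.e. cutting along it does not disconnect the face into two discs but genuinely simplifies its topology. Adding this arc as a new edge, subdividing at its endpoints if necessary by at most two new vertices, increases $V$ by $0,1$ or $2$, increases $E$ by $1,2$ or $3$ correspondingly, and increases $F$ by at most $1$ (it may increase $F$ by $0$ if the arc does not separate the face); in all cases $\Delta(V-E+F)=\Delta V-\Delta E+\Delta F \le 0$, so $V-E+F$ does not increase. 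Since each such move strictly decreases a suitable complexity measure of the face decomposition (e.g. total genus plus number of non-disc faces, or the sum of first Betti numbers of faces), the process terminates at a cellularly embedded graph $G'$ with $V'-E'+F'=\chi(M)$ by the Euler formula for cellularly embedded graphs, and $V-E+F\geq V'-E'+F'=\chi(M)$.

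\textbf{Equality case.} If $G$ is cellularly embedded, equality holds by the cited proposition. Conversely, if $G$ is not cellularly embedded, then in the reduction above at least one face-splitting/simplifying move is performed; I would argue that at least one such move strictly decreases $V-E+F$ — the most transparent instance being when a face has two distinct boundary components, so that an arc joining them does not separate the face ($\Delta F=0$) while $\Delta E\geq 1$ and $\Delta V$ can be kept $0$ by attaching the arc at existing vertices, giving $\Delta(V-E+F)\le -1$. Handling the case where every non-disc face is connected with a single boundary circle but positive genus requires the analogous observation that a non-separating loop on a positive-genus face contributes $\Delta F=0$ as well. Thus strict inequality $V-E+F>\chi(M)$ holds whenever $G$ fails to be cellularly embedded.

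\textbf{Main obstacle.} The delicate point is the topological input in the last paragraph of the main argument: precisely identifying, for an arbitrary non-disc face (an open subsurface with boundary, possibly of high genus or with many boundary circles), an embedded arc with endpoints on $G$ whose addition strictly reduces a well-founded complexity and never increases $V-E+F$, together with the correct accounting of how many new vertices and faces the addition creates. This is where one needs the classification of surfaces with boundary (or a careful CW/cellular-subdivision argument) to guarantee termination; an alternative route avoiding case analysis is to triangulate each non-disc face relative to $G$ and invoke the cellular Euler formula directly, absorbing all the topology into the known invariance of $\chi(M)$, but one must then check that passing from $G$ to this refinement only adds vertices, edges and faces in a way that makes $V-E+F$ non-increasing — which again reduces to the same elementary inequality $\Delta V-\Delta E+\Delta F\le 0$ for each elementary subdivision move.
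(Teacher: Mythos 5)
Your strategy --- reduce to a cellularly embedded graph by adding arcs inside non-disc faces, check that each move does not increase $V-E+F$, then invoke the Euler formula for cellular embeddings --- is essentially the paper's own route: the paper cites \cite{KarpukhinKP-14} for the closed case and otherwise adds arcs in two steps (first making every face have connected boundary, then removing handles), keeping track of $V$, $E$, $F$ and arriving at the explicit identity $V-E+F=\chi(M)+\sum_f(q_f-1)+\sum_f g_f$, from which both the inequality and the characterization of equality follow at once.

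One step in your write-up is wrong as stated, and it contradicts what you say later. In the connectedness reduction you claim that an arc joining two distinct connected components of $G$ across a common face splits that face, so $\Delta E=\Delta F=+1$ and $V-E+F$ is unchanged. In fact an arc whose endpoints lie on two different boundary components of a face does \emph{not} disconnect it (cutting an annulus along a radial arc leaves it connected), so $\Delta F=0$ and $V-E+F$ drops by $1$. You use the correct version in your equality discussion --- which is exactly where it matters, since this strict drop is what forces $V-E+F>\chi(M)$ for non-cellular embeddings --- so the slip does not break the inequality (either accounting gives $\Delta(V-E+F)\le 0$), but the two passages are inconsistent and the connectedness step should be corrected. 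Two smaller points: the statement implicitly requires $G\neq\emptyset$ (for $G=\emptyset$ on the sphere, $V-E+F=1<2=\chi$), so your empty-graph subcase should be excluded rather than handled by dummy vertices; and the topological input you yourself flag as the main obstacle (existence of a non-separating arc in every non-disc face, including non-orientable ones, together with termination of the procedure) is genuinely where the classification of compact surfaces with boundary enters --- the paper absorbs precisely this into the defect terms $q_f$ and $g_f$ without spelling it out either, so your proposal is at the same level of rigor as the published argument once the bookkeeping is fixed.
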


\begin{proof}    
For \emph{closed surfaces} (with empty boundary), Proposition~\ref{prop:Euler_inequality} can be found in~\cite[Section~2.3]{KarpukhinKP-14}. 
If there are non-empty boundary components, we will treat them as parts of the nodal set and add arcs in order to successively turn the graph into a cellularly embedded one:
\begin{enumerate}[(1)]
    \item
    \emph{Step 1 (Making faces have connected boundary)}:
    Add finitely many arcs to turn $N$ into an embedded metric graph $N_1$ in which the boundary of every connected component of $M^2 \backslash N_1$ is connected. This essentially means \emph{getting rid of annuli} (see Figure~\ref{fig:Step_1}).
    \item 
    \emph{Step 2 (Making faces homeomorphic to discs)}:
    Add a finite number of arcs to turn $N_1$ into an embedded metric graph $N_2$ in which every connected component of $M^2 \backslash N_2$ is homeomorphic to a disc. 
    This essentially means \emph{getting rid of handles} (see Figure~\ref{fig:Step_3}).
\end{enumerate}
Keeping track of the changes to $V$, $E$ and $F$ and applying the Euler formula, which remains valid for cellularly embedded graphs on manifolds with non-empty boundary, to the result,
we infer
    \[
        V - E + F = \chi(M) + \sum_{f} (q_f-1) + \sum_{f} g_f,
    \]
where the sums are over the set of faces, $q_f$ denotes the number of contours of a face $f$, and $$0 \leq g_f:= \begin{cases}
    1- \frac{\chi(f) + q_f}{2}, &\quad M \text{ orientiable}\\
    2- \chi(f) -q_f, &\quad M \text{ nonorientable}.
\end{cases}$$
denotes the genus of $f$. 
\end{proof}
Using $\sum_f (q_f - 1) \geq n - 1$ where $n$ denotes the number of connected components of $G$, we infer a minimal improvement of Proposition~\ref{prop:Euler_inequality}:

\begin{proposition}
\label{prop:improvedeulersinequality}
    Let $M$ be a compact and connected surface and $G\subset M$ an embedded metric graph with $n$ connected components. Suppose $G$ has  $V$ vertices, $E$ edges and partitions $M\setminus G$ into $F$ faces. Then
    \begin{equation}\label{eq:improvedeuler}
        V - E + F \geq \chi(M) + n-1.
    \end{equation}
\end{proposition}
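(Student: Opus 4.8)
The plan is to extract the inequality directly from the exact identity established while proving Proposition~\ref{prop:Euler_inequality}. There it was shown that, for any embedded metric graph $G \subset M$ with the stated data,
\[
 V - E + F \;=\; \chi(M) + \sum_{f} (q_f - 1) + \sum_{f} g_f ,
\]
where the sums range over the faces $f$ of $G$, $q_f \geq 1$ is the number of contours of $f$, and $g_f \geq 0$ is the genus of $f$. Since every $g_f \geq 0$, the proposition follows once we prove the combinatorial bound $\sum_f (q_f - 1) \geq n - 1$, equivalently $\sum_f q_f \geq F + n - 1$, where $n$ is the number of connected components of $G$. (The trivial case $G = \emptyset$, $n = 0$, is dispatched separately: there $V = E = 0$, $F = 1$, and the claim reduces to $\chi(M) \leq 2$.)

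To prove this bound I would set up an auxiliary multigraph $\Gamma$ whose vertices are the $F$ faces of $G$ together with the $n$ connected components of $G$, so $F + n$ vertices in total, and in which, for each face $f$ and each of its $q_f$ contours $\gamma$, there is one edge joining $f$ to the unique connected component of $G$ containing $\gamma$; uniqueness holds because $\gamma$ is connected and $\gamma \subseteq \partial f \subseteq G$. By construction $\Gamma$ has exactly $\sum_f q_f$ edges. One checks that $\Gamma$ has no isolated vertices: a face with empty boundary would be clopen in $M$, hence all of $M$, forcing $G = \emptyset$, so each face-vertex has degree at least one; and a punctured neighbourhood of any point of a component $C$ meets some face, which then has a contour lying in $C$, so each component-vertex also has degree at least one.

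The crucial point is that $\Gamma$ is connected, and this is exactly where connectedness of $M$ enters: joining interior points of two faces $f, f'$ by a path in $M$ and recording the faces and components of $G$ that the path successively meets yields a walk in $\Gamma$ from $f$ to $f'$, and since every component-vertex is adjacent to a face-vertex, all of $\Gamma$ is one connected component. A connected graph on $F + n$ vertices has at least $F + n - 1$ edges, i.e.\ $\sum_f q_f \geq F + n - 1$, hence $\sum_f (q_f - 1) = \sum_f q_f - F \geq n - 1$; inserting this into the identity above gives $V - E + F \geq \chi(M) + n - 1$.

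I expect the only genuine obstacle to be making the connectivity argument for $\Gamma$ fully rigorous: this requires the elementary but slightly fiddly point-set topology of graphs embedded in surfaces — that faces are open with boundary contained in $G$, that a connecting path can be taken to meet $G$ in a controlled way, and that every component of $G$ genuinely borders a face. Everything else is bookkeeping layered on Proposition~\ref{prop:Euler_inequality}. A hands-on alternative that sidesteps $\Gamma$ is induction on $n$: for $n \geq 2$, connectedness of $M$ produces a face bordered by two distinct components of $G$, and inserting one arc across that face through (if necessary) two new dummy vertices merges two of its contours without splitting it, decreasing both $n$ and $V - E + F$ by one; thus $(V-E+F)-(\chi(M)+n-1)$ is left invariant and the claim reduces to the case $n = 1$, which is Proposition~\ref{prop:Euler_inequality}.
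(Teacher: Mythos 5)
Your proposal follows essentially the same route as the paper: there, \eqref{eq:improvedeuler} is deduced in one line from the identity $V-E+F=\chi(M)+\sum_f(q_f-1)+\sum_f g_f$ established in the proof of Proposition~\ref{prop:Euler_inequality}, together with the bound $\sum_f(q_f-1)\geq n-1$. Your argument is the same, except that you additionally supply a correct justification of that bound (via the connected face/component incidence multigraph, or alternatively the arc-insertion induction), which the paper simply asserts without proof.
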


\begin{figure}
    \begin{tikzpicture}[scale = 1.15]
        \draw[thick] (0,.5) circle (1cm);
        \draw[thick] (0,-.5) circle (1cm);
        
        \draw[fill = black, thick] (.85,0) circle (2pt);
        \draw[fill = black, thick] (-.85,0) circle (2pt);
        \draw (.5, 0) node {$v_1$};
        \draw (0,1.75) node {$G_1$};
        
        \draw[thick](.85,0) -- (4,.5);
        \draw (1.5, -.2) node {$e_1$};

        \draw (5,.5) ellipse (1cm and 1.2cm);
        \draw[fill = black, thick] (4,.5) circle (2pt);
        \draw (4.3,.5) node {$v_2$};
        \draw (5,-1.5) node {$G_2$};

        \draw (2.4,2) node {$D$};

        \draw[very thick, dotted, black!80,->, rounded corners = 5pt]
            (2.4,.4) -- (3.8, .6) arc  (175:-175:1.2cm and 1.4cm) -- (2.45,.1);
        \draw[thick, fill=black!80, black!80] (2.4,.4) circle (1pt); 
        \draw[thick, fill=black!80, black!80] (2.4,.1) circle (1pt); 
    \end{tikzpicture}

    \caption{Connecting different connected components of the nodal set as in Proposition~\ref{prop:improvedeulersinequality}. The set $D$ will remain connected.}
    \label{fig:Step_1}
    
\end{figure}

\begin{figure}
    \begin{tikzpicture}[scale = 2]
    \begin{scope}[xshift = -4cm]
        \draw[very thick] (0,0) circle (1.25cm);

        \draw[thick, dashed] (.45,.25) circle (.5cm);

        \draw[dotted, very thick, rounded corners = 5pt, black!80] (75:1.25) -- (.45,1) -- (.45,.75);
        \draw[dotted, very thick, rounded corners = 5pt, black!80] (-70:1.25) -- (.45,-.5) -- (.45,-.25);

        \draw[thick, fill = white] (75:1.25) circle (1.5pt);
        \draw[thick, fill = white] (-70:1.25) circle (1.5pt);

        \draw (.45,.25) node {$U_j$};
    \end{scope}

            \begin{scope}[]
                \draw[thick, rounded corners = 36pt] 
                (-1,.5) -- (0,.5) -- (1,1.15) -- (2,0) -- (1,-1.15) -- (0,-.5) -- (-1,-.5);

                \begin{scope}[xshift = .7cm, yshift = .1cm]
                    \draw[thick, rounded corners = 20pt] (-.4,-.1) -- (0,-.35) -- (.4,-.1);
                    \draw[thick, rounded corners = 20pt] (-.24,-.18) -- (0,.1) -- (.24,-.18);
                \end{scope}

                \draw[dashed, very thick] (-1,.5) arc (90:270:.25cm and .5cm);

                \draw[dashed] (-1,.5) arc (90:-90:.25cm and .5cm);

            \draw[dotted, very thick, black!80, rounded corners = 18pt] (-1,.5) --(0,.35) -- (1.35,.45);
            \draw[dotted, thick, black!80, rounded corners = 18pt] (-1,-.5) -- (0,-.35) -- (1.35,-.45);    

            \draw[dotted, very thick, black!80] (1.75,0) arc (0:185:.4cm and .45cm);
            \draw[dotted, thick, black!80] (.95,0) arc (180:360:.4cm and .45cm);

                 \draw[thick, black!80, fill = white]
                (1.35,.45) circle (1.5pt);
                 \draw[thick, black!80, fill = white]
                (1.35,-.45) circle (1.5pt);
            \end{scope}
        \draw (-1,0) node {$U_j$};

    \end{tikzpicture}
    \caption{Removing one handle $U_j$, which is attached at the dashed line, by adding four vertices and four edges (dotted line)}
    \label{fig:Step_3}
\end{figure}

\section{Main results}	
    \label{sec:results}

We now state and prove our main results, namely upper bounds on the number of critical points, as well as upper bounds on the sum of their vanishing orders in terms of the number of nodal domains and the Euler characteristic of the surface $M$, using the graph structure of the nodal set (see Proposition~\ref{prop:Cheng}).

\begin{definition}
    Let $M$ be a compact and connected Riemannian surface and let $u$ solve $- \Delta u + m u = 0$ for $m \in C^\infty(M)$. 
    Assume that $N(u)$ contains at least one critical point.
    We then set for the nodal graph $\mathcal{G}_u$:
    \begin{enumerate}[align=parleft]
        \item [$ V = V(u)$:]
        The number of vertices of the $\mathcal{G}_u$, that is the cardinality of the critical set $C(u)$ plus the number of connected components of $N(u)$ which are homeomorphic to a circle.
        \item [$E = E(u)$:]
        The number of edges of the $\mathcal{G}_u$, that is the number of arcs connecting critical points plus the number of connected components of $N(u)$ homeomorphic to a circle.
        \item [$F = F(u)$:]
        The number of faces of $M \backslash \mathcal{G}_u$, which coincides with the nodal count $\mu(u)$.
\end{enumerate}
\end{definition}

For the sake of convenience, we also recall:
    \begin{enumerate}[align=parleft]
        \item [$n(u)$:]
        The number of connected components of $N(u)$.
        \item [$c(u)$:]
        The number of connected components of $N(u)$ which are homeomorphic to a circle.
        \item[$\lvert C_\interior (u) \rvert$:]
        The number of critical points of $u$ in $\interior(M)$.
        \item[$\lvert C_\partial (u) \rvert$:]
        The number of critical points of $u$ on $\partial M$.     
    \end{enumerate}

\begin{theorem}
    \label{thm:main_1}
    Let $M$ be a compact and connected Riemannian surface, let $m \in C^\infty(M)$, and let $0 \not \equiv u$ solve $- \Delta u + m u  = 0$ with Dirichlet boundary conditions on $\partial M$ and assume that there is at least one critical point in the nodal set.
    Then
    \begin{equation}\label{eq:main_1}
    \lvert C_{\interior}(u)\rvert + \frac{1}{2} \lvert C_{\partial}(u) \rvert   
    \leq
    \mu(u) - \chi(M)- (n(u)-1).
    \end{equation} 
    We have equality in \eqref{eq:main_1} if and only if $\Gamma_u(z)=2$ for all $z\in C(u)$ and equality holds in \eqref{eq:improvedeuler}.
\end{theorem}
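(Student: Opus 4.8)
The plan is to feed the nodal graph $\mathcal{G}_u$ into the improved Euler inequality of Proposition~\ref{prop:improvedeulersinequality} and then translate the resulting bound on $E-V$ into a statement about critical points by means of the handshake lemma together with the vanishing-order dictionary of Lemma~\ref{lem:vanishing_order}.

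First I would collect the combinatorial data of $\mathcal{G}_u$. By Corollary~\ref{cor:graph} every nodal arc that is not a circle component joins exactly two points of $C(u)$; writing $E_0$ for the number of such arcs and recalling that each of the $c(u)$ circle components contributes one dummy vertex carrying one loop, we get $V = |C(u)| + c(u)$ and $E = E_0 + c(u)$, hence $E - V = E_0 - |C(u)|$. Since $\mathcal{G}_u$ and $N(u)$ agree as subsets of $M$, the faces of $\mathcal{G}_u$ are precisely the nodal domains, so $F = \mu(u)$, and the number of connected components of $\mathcal{G}_u$ equals $n(u)$ (adding dummy vertices changes neither the point set nor the number of components). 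Plugging this into \eqref{eq:improvedeuler} yields $E_0 - |C(u)| = E - V \le F - \chi(M) - (n(u)-1) = \mu(u) - \chi(M) - (n(u)-1)$.

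Next I would bound $E_0 - |C(u)|$ from below. Applying the handshake lemma to the subgraph of $\mathcal{G}_u$ spanned by $C(u)$ (all arcs incident to a critical point run between critical points, so no dummy vertices interfere) and invoking Lemma~\ref{lem:vanishing_order} to identify the degree of $z$ with $2\Gamma_u(z)$ when $z \in C_\interior(u)$ and with $\Gamma_u(z)+1$ when $z \in C_\partial(u)$ (the two arcs lying inside $\partial M$ at such a point being among the counted edges), I obtain
\[
2 E_0 = \sum_{z \in C_\interior(u)} 2\Gamma_u(z) + \sum_{z \in C_\partial(u)} \bigl(\Gamma_u(z)+1\bigr),
\]
and therefore $E_0 - |C(u)| = \sum_{z \in C_\interior(u)} \bigl(\Gamma_u(z)-1\bigr) + \tfrac12 \sum_{z \in C_\partial(u)} \bigl(\Gamma_u(z)-1\bigr)$. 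Since $\Gamma_u(z) \ge 2$ at every critical point by Lemma~\ref{lem:vanishing_order}, each term is at least $1$, respectively $\tfrac12$, giving $E_0 - |C(u)| \ge |C_\interior(u)| + \tfrac12 |C_\partial(u)|$. Combined with the previous paragraph this is \eqref{eq:main_1}.

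For the equality statement I would simply observe that the proof consists of the chain
\[
|C_\interior(u)| + \tfrac12 |C_\partial(u)| \le E - V \le \mu(u) - \chi(M) - (n(u)-1),
\]
where the left inequality is an equality exactly when $\Gamma_u(z)=2$ for every $z \in C(u)$ and the right inequality is an equality exactly when \eqref{eq:improvedeuler} holds with equality; hence overall equality forces both conditions, and conversely both conditions give equality. I expect the only delicate point to be the bookkeeping around the boundary and the dummy vertices --- keeping track that the arcs inside $\partial M$ count as edges of $\mathcal{G}_u$, that a $C_\partial(u)$-vertex has degree $\Gamma_u(z)+1$ rather than $2\Gamma_u(z)$, and that dummy vertices (of degree $2$) affect neither $E - V$ nor the component count --- but all of this has been set up in Section~\ref{sec:preliminaries}, so no genuinely new obstacle should arise beyond careful counting.
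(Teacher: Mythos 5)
Your proposal is correct and follows essentially the same route as the paper's proof: identify $V$, $E$, $F$ for the nodal graph $\mathcal{G}_u$, bound $E-V$ from below via the handshake lemma together with the degree/vanishing-order dictionary of Lemma~\ref{lem:vanishing_order}, and from above via Proposition~\ref{prop:improvedeulersinequality}, with the equality characterization read off from the same two-step chain. The only cosmetic difference is that you write the degree sum exactly in terms of $\Gamma_u$ (as the paper does in the proof of Theorem~\ref{thm:main_2}) rather than using the bounds $\deg \geq 4$ and $\deg \geq 3$ directly, which changes nothing of substance.
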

\begin{proof}
Note that in every connected component of the original nodal set $N(u)$, which is not homeomorphic to a circle, vertices are exactly points of $C(u)$ and as vertices in the embedded metric graph, the interior critical points have degree at least four and the critical points at the boundary have at least degree $3$.
We therefore obtain
\begin{equation}
\label{eq:lower_bound_E}
E-c(u)
=
\frac{1}{2} \sum_{\text{$z \in C(u)$}} \operatorname{deg}(z) 
=
\frac{1}{2} \sum_{\text{$z \in C_{\interior}(u)$}} \operatorname{deg}(z) + \frac{1}{2} \sum_{\text{$z \in C_\partial(u)$}} \operatorname{deg}(z)
\geq 
2 |C_{\interior}(u)| + \frac{3}{2} |C_\partial(u)|
\end{equation}
with equality if and only if $\Gamma_u(z)=2$ for all $z \in C(u)$ by Lemma~\ref{lem:vanishing_order}.
Using $V(u) = |C(u)| +c(u)$ and $F(u) = \mu(u)$ we conclude with Proposition~\ref{prop:improvedeulersinequality} that
\begin{equation*}
    \lvert C_{\interior}(u)\rvert + \frac{1}{2} \lvert C_{\partial}(u) \rvert \leq E - |C(u)| -c(u)
    \leq
    \mu(u) - \chi(M)- (n(u)-1),
\end{equation*}
with equality if and only if we have $\Gamma_u(z)=2$ for all $z\in C(u)$ and equality in~\eqref{eq:improvedeuler}.
\end{proof}

\begin{remark}\label{rmk:main_1_better}
The proof of Theorem~\ref{thm:main_1} is based on the observation that 
$$C_{\interior} \subset C_4(u):= \{ z \in C(u) | \deg(z) \ge 4\}.$$
Indeed, using this, one can replace \eqref{eq:main_1} by the minimally stronger inequality
\begin{equation}\label{eq:main_1alt}
|C_\interior(u)| + |C_\partial(u)\cap C_4(u)| + \frac{1}{2} |C_\partial(u) \setminus C_4(u)| \le \mu(u) - \chi(M) - (n(u)-1),
\end{equation}
an estimate which will be used to bound the number of interior critical points in Theorem~\ref{thm:contours}.
\end{remark}

A similar statement holds for the sum of degrees of critical points:

\begin{theorem}
    \label{thm:main_2}
    Let $M$ be a compact and connected Riemannian surface, let $m \in C^\infty(M)$, let $0 \not \equiv u$ solve $- \Delta u + m u  = 0$ with Dirichlet boundary conditions on $\partial M$ and assume that there is at least one critical point in the nodal set.
    Then, the sum of vanishing orders of critical points satisfies
    \[
    \sum_{z \in C_{\interior}(u)}
    \Gamma_u(z) + \frac{1}{2}\sum_{z\in C_{\partial}(u)} \Gamma_u(z)
    \leq 
    \mu(u) - \chi(M) +  |C_{\interior}(u)| + \frac{1}{2}|C_{\partial}(u)|-(n(u) -1).
    \]
    Equality holds if and only if equality holds in \eqref{eq:improvedeuler}.
\end{theorem}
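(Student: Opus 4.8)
The plan is to re-run the proof of Theorem~\ref{thm:main_1} almost verbatim, but to replace the one lossy step there — estimating vertex degrees from below by $4$ and $3$ — with the \emph{exact} values of those degrees furnished by Lemma~\ref{lem:vanishing_order}. Because that substitution is an identity rather than an inequality, the resulting estimate should inherit its only inequality from Proposition~\ref{prop:improvedeulersinequality}, which explains why the claimed equality case asks merely for equality in \eqref{eq:improvedeuler}.

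Concretely, I would first record the handshake identity for the nodal graph $\mathcal{G}_u$, exactly as in \eqref{eq:lower_bound_E}: each of the $c(u)$ components of $N(u)$ homeomorphic to a circle contributes one dummy vertex and one loop edge, while every remaining edge joins two (not necessarily distinct) points of $C(u)$, so that
\[
  E - c(u) \;=\; \frac12 \sum_{z \in C(u)} \deg(z) \;=\; \frac12\sum_{z\in C_{\interior}(u)} \deg(z) + \frac12\sum_{z\in C_\partial(u)} \deg(z),
\]
with loops counted twice in each degree. By Lemma~\ref{lem:vanishing_order}(ii)--(iii) one has $\deg(z) = 2\,\Gamma_u(z)$ for $z \in C_{\interior}(u)$ and $\deg(z) = \Gamma_u(z) + 1$ for $z \in C_\partial(u)$, so substituting yields
\[
  E - c(u) \;=\; \sum_{z\in C_{\interior}(u)} \Gamma_u(z) + \frac12 \sum_{z\in C_\partial(u)} \Gamma_u(z) + \frac12\,\lvert C_\partial(u)\rvert .
\]
Next I would feed in the topology: with $V = \lvert C(u)\rvert + c(u) = \lvert C_{\interior}(u)\rvert + \lvert C_\partial(u)\rvert + c(u)$ and $F = \mu(u)$, Proposition~\ref{prop:improvedeulersinequality} gives $E - c(u) \le \lvert C_{\interior}(u)\rvert + \lvert C_\partial(u)\rvert + \mu(u) - \chi(M) - (n(u)-1)$. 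Combining with the previous display and cancelling $\tfrac12\lvert C_\partial(u)\rvert$ against $\lvert C_\partial(u)\rvert$ gives precisely
\[
  \sum_{z\in C_{\interior}(u)} \Gamma_u(z) + \frac12 \sum_{z\in C_\partial(u)} \Gamma_u(z) \;\le\; \mu(u) - \chi(M) + \lvert C_{\interior}(u)\rvert + \frac12\lvert C_\partial(u)\rvert - (n(u)-1),
\]
and since the sole inequality used is Proposition~\ref{prop:improvedeulersinequality}, equality holds if and only if it holds in \eqref{eq:improvedeuler}.

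I do not expect a real obstacle here: the statement is essentially Theorem~\ref{thm:main_1} re-expressed through the dictionary $\deg \leftrightarrow \Gamma_u$, and no strong unique continuation input beyond what is already packaged in Lemma~\ref{lem:vanishing_order} is needed. The only point requiring care is the bookkeeping — ensuring that loops at a critical point enter $\deg(z)$ with multiplicity two, so that Lemma~\ref{lem:vanishing_order} applies as stated, and that the $c(u)$ circle components carrying only dummy vertices are subtracted off correctly, exactly as in the proof of Theorem~\ref{thm:main_1}.
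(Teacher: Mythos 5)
Your proposal is correct and follows the paper's own proof essentially verbatim: the same handshake identity $E - c(u) = \tfrac12\sum_{z\in C(u)}\deg(z)$, the same dictionary $\deg(z)=2\Gamma_u(z)$ (interior) and $\deg(z)=\Gamma_u(z)+1$ (boundary) from Lemma~\ref{lem:vanishing_order}, and the same application of Proposition~\ref{prop:improvedeulersinequality} with $V=|C(u)|+c(u)$, $F=\mu(u)$. The bookkeeping and the equality criterion (equality in \eqref{eq:improvedeuler} alone, since the degree substitution is an identity) match the paper exactly.
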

\begin{proof}
    By Lemma~\ref{lem:vanishing_order} we have
    \begin{equation}\label{eq:degordformula}
        \deg(z) = \begin{cases} 2\Gamma_u(z), &\quad z \in C_{\interior}(u),\\
        \Gamma_u(z) +1, &\quad z \in C_{\partial}(u). \end{cases}
    \end{equation}
    Proceeding as in~\eqref{eq:lower_bound_E}, we have
    \begin{align*}
    E-c(u)
    &=
    \frac{1}{2} \sum_{\text{$z \in C_{\interior}(u)$}} \operatorname{deg}(z) + \frac{1}{2} \sum_{\text{$z \in C_\partial(u)$}} \operatorname{deg}(z)
    = \sum_{z\in C_{\interior}(u)} \Gamma_u(z) + \frac{1}{2} \sum_{z\in C_{\partial}(u)} \Gamma_u(z) + \frac{1}{2} |C_\partial(u)|.
    \end{align*} 
Using $V(u) = |C(u)| +c(u)$ and $F(u) = \mu(u)$ we conclude with Proposition~\ref{prop:improvedeulersinequality} that
    \begin{align*}
\sum_{z\in C_{\interior}(u)} \Gamma_u(z) + \frac{1}{2} \sum_{z\in C_{\partial}(u)} \Gamma_u(z) & = E - c(u)- \frac{1}{2} |C_\partial(u)| \\
&\le  V+ F -\chi(M) -c(u)- \frac{1}{2} |C_\partial(u)|-(n(u) -1) \\
&=  \mu(u) - \chi(M) +  |C_{\interior}(u)| + \frac{1}{2}|C_{\partial}(u)| -(n(u) -1)
    \end{align*}
    with equality if and only if equality holds in \eqref{eq:improvedeuler}.
\end{proof}

We now formulate some consequences.
The next statement provides an upper bound only depending on the number of nodal domains $\mu(u)$.

\begin{corollary}\label{cor:main_2}
    Let $M$ be a compact and connected Riemannian surface, let $m \in C^\infty(M)$, let $0 \not \equiv u$ solve $- \Delta u + m u  = 0$ with Dirichlet boundary conditions on $\partial M$ and assume that there is at least one critical point in the nodal set.
    Then, 
    \begin{equation}\label{eq:main_2cor_1}
    \lvert C_{\interior}(u)\rvert + \frac{1}{2} \lvert C_{\partial}(u) \rvert 
    \leq
    \mu(u) - \chi(M),
    \end{equation}
    and
    \begin{equation}
    \label{eq:main_2cor_2}
   \sum_{z \in C_{\interior}(u)}
    \Gamma_u(z) + \frac{1}{2}\sum_{z\in C_{\partial}(u)} \Gamma_u(z)
    \leq 
    2 \mu(u) - 2\chi(M). 
    \end{equation}
    Equality holds in \eqref{eq:main_2cor_1} and \eqref{eq:main_2cor_2}, respectively, if and only if $N(u)$ is cellularly embedded and $\Gamma_u(z)=2$ for all $z\in C(u)$.
\end{corollary}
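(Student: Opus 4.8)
Both inequalities will be read off from Theorems~\ref{thm:main_1} and~\ref{thm:main_2} by discarding the nonnegative quantity $n(u)-1$; the only genuine work is to match up the equality cases.

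First I would establish \eqref{eq:main_2cor_1}. Since $N(u)$ is nonempty it has at least one connected component, so $n(u)\ge 1$ and hence $-(n(u)-1)\le 0$. Inserting this into \eqref{eq:main_1} of Theorem~\ref{thm:main_1} immediately yields $|C_{\interior}(u)|+\tfrac12|C_\partial(u)|\le\mu(u)-\chi(M)$. For \eqref{eq:main_2cor_2} I would start from the conclusion of Theorem~\ref{thm:main_2}, substitute the bound \eqref{eq:main_2cor_1} just obtained for the combination $|C_{\interior}(u)|+\tfrac12|C_\partial(u)|$ appearing on its right-hand side, and again drop $-(n(u)-1)\le 0$:
\[
\sum_{z\in C_{\interior}(u)}\Gamma_u(z)+\tfrac12\sum_{z\in C_\partial(u)}\Gamma_u(z)\ \le\ \bigl(\mu(u)-\chi(M)\bigr)+\bigl(\mu(u)-\chi(M)\bigr)-(n(u)-1)\ \le\ 2\mu(u)-2\chi(M).
\]

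The equality analysis is where some care is required. For \eqref{eq:main_2cor_1}, equality forces simultaneously $n(u)=1$ and equality in \eqref{eq:main_1}; by the equality clause of Theorem~\ref{thm:main_1} the latter means $\Gamma_u(z)=2$ for every $z\in C(u)$ together with equality in \eqref{eq:improvedeuler}. Recalling from the proof of Proposition~\ref{prop:Euler_inequality} that $V-E+F=\chi(M)+\sum_f(q_f-1)+\sum_f g_f$ with $q_f\ge 1$ and $g_f\ge 0$, equality in \eqref{eq:improvedeuler} forces $\sum_f(q_f-1)=n(u)-1$ and $\sum_f g_f=0$; with $n(u)=1$ this says every face is a disc, i.e.\ $\mathcal{G}_u$ is cellularly embedded. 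Conversely, cellular embeddedness already forces $n(u)=1$ --- otherwise Proposition~\ref{prop:improvedeulersinequality} would give $\chi(M)=V-E+F\ge\chi(M)+n(u)-1>\chi(M)$ --- and together with $\Gamma_u\equiv 2$ it turns every step into an equality. Hence equality in \eqref{eq:main_2cor_1} holds exactly when $\mathcal{G}_u$ is cellularly embedded and $\Gamma_u(z)=2$ for all $z\in C(u)$. The same bookkeeping settles \eqref{eq:main_2cor_2}: equality there needs equality in \eqref{eq:improvedeuler} (from Theorem~\ref{thm:main_2}), equality in \eqref{eq:main_2cor_1}, and $n(u)=1$; but the second of these already encodes both cellular embeddedness and $\Gamma_u\equiv 2$, and conversely those two conditions reverse the entire chain.

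The only obstacle --- and it is a mild one --- is the observation that ``cellularly embedded'' already subsumes ``$n(u)=1$'', so that equality in \eqref{eq:improvedeuler} degenerates to the sharp Euler identity $V-E+F=\chi(M)$ and the equality condition stated in the corollary needs no separate mention of $n(u)$.
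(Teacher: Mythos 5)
Your proposal is correct and follows exactly the route the paper intends: the corollary is stated as an immediate consequence of Theorems~\ref{thm:main_1} and~\ref{thm:main_2} obtained by discarding the term $n(u)-1\ge 0$, and your equality bookkeeping (equality in \eqref{eq:improvedeuler} together with $n(u)=1$ being equivalent, via Proposition~\ref{prop:Euler_inequality}, to cellular embeddedness, combined with $\Gamma_u\equiv 2$ from the equality clause of Theorem~\ref{thm:main_1}) is precisely the argument implicit in the paper.
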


Together with Courant's Nodal Domain Theorem, Proposition \ref{prop:courant}, we obtain:
\begin{corollary}\label{cor:1}
    Let $M$ be a compact and connected Riemannian surface, let $m \in C^\infty(M)$, and let $u_k$ be an eigenfunction, associated to the $k$-th eigenvalue $\lambda_k$ of $- \Delta + m$ with at least one critical point in the nodal set.
    Then
    \begin{equation}
    \sum_{z \in C_{\interior}(u)}
    \Gamma_u(z) + \frac{1}{2}\sum_{z\in C_{\partial}(u)} \Gamma_u(z)
    \leq
    k - \chi(M),
    \end{equation}
    and
    \begin{equation}
     \sum_{z \in C_{\interior}(u)}
    \Gamma_u(z) + \frac{1}{2}\sum_{z\in C_{\partial}(u)} \Gamma_u(z)
    \leq
    2 k - 2\chi(M).
    \end{equation}
\end{corollary}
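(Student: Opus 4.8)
The plan is to derive this directly from Corollary~\ref{cor:main_2} together with Courant's nodal domain theorem (Proposition~\ref{prop:courant}), so the argument is short. First I would record that an eigenfunction $u_k$ of $-\Delta + m$ for the eigenvalue $\lambda_k$ is nothing but a nontrivial solution of $-\Delta u_k + \widetilde m\, u_k = 0$ with the smooth potential $\widetilde m := m - \lambda_k \in C^\infty(M)$, still subject to Dirichlet boundary conditions on $\partial M$. Since $\widetilde m$ is smooth, all the structural input used up to this point --- Proposition~\ref{prop:Cheng}, Lemma~\ref{lem:vanishing_order}, the nodal graph construction, and the Euler-type inequalities --- applies verbatim to $u_k$, so Corollary~\ref{cor:main_2} is at our disposal for $u_k$ (the standing hypothesis that $N(u_k)$ contains at least one critical point is assumed).

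Next I would simply quote the two inequalities of Corollary~\ref{cor:main_2}, namely
\[
\lvert C_{\interior}(u_k)\rvert + \tfrac12 \lvert C_{\partial}(u_k)\rvert \le \mu(u_k) - \chi(M)
\quad\text{and}\quad
\sum_{z \in C_{\interior}(u_k)} \Gamma_{u_k}(z) + \tfrac12 \sum_{z \in C_{\partial}(u_k)} \Gamma_{u_k}(z) \le 2\mu(u_k) - 2\chi(M),
\]
and then invoke Courant's bound $\mu(u_k) \le k$. Since $\chi(M)$ is a fixed integer and the right-hand sides above are weakly increasing in $\mu(u_k)$, substituting $\mu(u_k) \le k$ yields the claimed bounds $k - \chi(M)$ and $2k - 2\chi(M)$, respectively. (For the first displayed inequality of the corollary the left-hand side is to be read as $\lvert C_{\interior}(u_k)\rvert + \tfrac12\lvert C_{\partial}(u_k)\rvert$, matching~\eqref{eq:main_2cor_1}.)

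I do not anticipate any real obstacle: the entire mathematical content is already packaged in Corollary~\ref{cor:main_2} and Proposition~\ref{prop:courant}, and the only point requiring a moment's care is the innocuous replacement of $m$ by $m - \lambda_k$, which preserves smoothness and hence leaves every earlier result applicable to $u_k$.
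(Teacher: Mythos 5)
Your proof is correct and takes essentially the same route as the paper, which obtains Corollary~\ref{cor:1} simply by combining Corollary~\ref{cor:main_2} with Courant's nodal domain theorem (Proposition~\ref{prop:courant}). Your two clarifying remarks --- rewriting the eigenvalue equation as $-\Delta u_k + (m-\lambda_k)u_k = 0$ with the smooth potential $m-\lambda_k$, and reading the left-hand side of the first inequality as $\lvert C_{\interior}(u_k)\rvert + \tfrac12\lvert C_{\partial}(u_k)\rvert$ in accordance with~\eqref{eq:main_2cor_1} --- correctly handle conventions the paper leaves implicit.
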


Using that on compact and connected Riemannian surfaces, the $k$-th eigenvalue is asymptotically proportional to $k$ by Weyl's law, we obtain:

\begin{corollary}
    \label{cor:3}
    Let $M$ be a compact and connected Riemannian surface and let $m \in C^\infty(M)$.
    Then there is $c >0$ such that for sufficiently large $\lambda$ and every eigenfunction $u$ of $- \Delta + m$ to the eigenvalue $\lambda$, we have
    \[
    \sum_{z \in C_{\interior}(u)}
    \Gamma_u(z) + \frac{1}{2}\sum_{z\in C_{\partial}(u)} \Gamma_u(z)
    \leq
    c \lambda.
    \]
\end{corollary}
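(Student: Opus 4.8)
The plan is to combine Corollary~\ref{cor:1} with Weyl's asymptotic law for the eigenvalue counting function on a compact, connected Riemannian surface. Recall that Weyl's law states that the counting function $N(\lambda) := \#\{ j : \lambda_j \leq \lambda\}$ (counting multiplicities) satisfies $N(\lambda) = \frac{\operatorname{area}(M)}{4\pi}\lambda + o(\lambda)$ as $\lambda \to \infty$; in particular, since $M$ is fixed, there exists a constant $c_1 > 0$ and $\lambda_0$ such that $N(\lambda) \leq c_1 \lambda$ for all $\lambda \geq \lambda_0$. (The presence of the smooth potential $m$ does not affect the leading asymptotics, since $-\Delta + m$ is a relatively compact perturbation of $-\Delta$ on the form level; alternatively one can cite the standard reference for Schrödinger operators on compact manifolds.)

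First, I would fix an eigenfunction $u$ of $-\Delta + m$ with eigenvalue $\lambda$, and let $k$ be an index with $\lambda_k = \lambda$; we may take $k \leq N(\lambda)$. If $u$ has no critical point in its nodal set, the left-hand side of the desired inequality is an empty sum, hence zero, and the bound is trivial (for $\lambda > 0$); so assume there is at least one critical point. Then Corollary~\ref{cor:1} applies and yields
\[
\sum_{z \in C_{\interior}(u)} \Gamma_u(z) + \frac{1}{2}\sum_{z \in C_{\partial}(u)} \Gamma_u(z) \leq k - \chi(M) \leq N(\lambda) - \chi(M).
\]
Second, I would insert the Weyl bound $N(\lambda) \leq c_1\lambda$ valid for $\lambda \geq \lambda_0$, obtaining the right-hand side $c_1 \lambda - \chi(M)$. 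Since $\chi(M)$ is a fixed integer, for $\lambda$ large enough one has $c_1\lambda - \chi(M) \leq c\lambda$ with, say, $c := c_1 + |\chi(M)|$ (and $\lambda \geq 1$), which gives the claimed bound $\sum \cdots \leq c\lambda$ for all sufficiently large $\lambda$.

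There is no real obstacle here; the statement is essentially a packaging of Corollary~\ref{cor:1} with a classical fact. The only point requiring a word of care is the justification that Weyl's law holds for $-\Delta + m$ and not just for $-\Delta$; this is standard and can be dispatched by a min-max comparison argument, noting $-\Delta + \inf m \leq -\Delta + m \leq -\Delta + \sup m$ in the form sense, so the eigenvalue counting functions differ only by the bounded shift $\sup m - \inf m$ in the argument, which does not affect the leading order. I would state this briefly and cite a standard reference for the Weyl law on compact manifolds. One might also remark that the constant $c$ can be taken as $\operatorname{area}(M)/(4\pi) + \varepsilon$ for any $\varepsilon > 0$ at the cost of enlarging the threshold on $\lambda$, but this refinement is not needed for the statement as phrased.
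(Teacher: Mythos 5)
Your proof is correct and follows essentially the same route as the paper: combine Corollary~\ref{cor:1} with Weyl's law to convert the eigenvalue index $k$ into a bound proportional to $\lambda$. The extra care you take (the case with no critical points, the insensitivity of the Weyl asymptotics to the potential $m$, and absorbing $-\chi(M)$ into the constant) only fills in details the paper leaves implicit.
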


\begin{proof}[Proof of Corollary~\ref{cor:3}]
    By Weyl's asymptotic law, the $k$-th eigenvalue is proportional to $\lambda$.
    The statement then follows from Corollary~\ref{cor:1}.
\end{proof}

\begin{remark}
    Corollary~\ref{cor:3},  should be compared to~\cite{DonnellyF-88} where it is proved that for an eigenfunction $u$ to the eigenvalue $\lambda$, one has
    \[
    \max_{z \in C(u)}
    \Gamma_u(z)
    \leq
    c \sqrt{\lambda}.
    \]
\end{remark}

Let us conclude the section with bounds on the number of interior critical points.

\begin{theorem}
    \label{thm:contours}
    Let $M$ be a compact, connected Riemannian surface and $m \in C^\infty(M)$. Denote the number of contours by $q(M)$. 
    \begin{enumerate}[(i)]
        \item 
        If $u$ solves $-\Delta u + m u = 0$ with Dirichlet boundary conditions on $\partial M$ then 
        \[
        \lvert C_{\interior}(u) \rvert
        \leq
        \mu(u)
        -
        \chi(M)- q(M),
        \quad
        \text{and}
        \quad
        \sum_{z \in C(u)}
        \Gamma_u(z)
        \leq
        2 \mu(u)
        -
        2 \chi(M)-2q(M),
        \]
        \item 
        If $u_k$ is an eigenfunction to the $k$-th eigenvalue $\lambda_k$ of $-\Delta + m$, then        
        \[
        \lvert C_\interior(u_k) \rvert
        \leq
        k
        -
        \chi(M)-q(M)
        \quad
        \text{and}
        \quad
        \sum_{z \in C(u)}
        \Gamma_u(z)
        \leq
        2 k - 2 \chi(M)-2q(M).
        \]
    \end{enumerate}
\end{theorem}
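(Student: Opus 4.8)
The plan is to derive part~(ii) from part~(i) via Courant's nodal domain theorem (Proposition~\ref{prop:courant}, $\mu(u_k)\le k$), and to obtain both inequalities of part~(i) from the refined estimate \eqref{eq:main_1alt} of Remark~\ref{rmk:main_1_better}, from Theorem~\ref{thm:main_2}, and from one extra, purely combinatorial input about the boundary. Writing $q=q(M)$, $n=n(u)$ and $C_4=C_4(u)$, that input is the inequality
\[
  (n-1)+\lvert C_\partial(u)\cap C_4\rvert+\tfrac12\lvert C_\partial(u)\setminus C_4\rvert\ \ge\ q,
\]
to which I shall refer as $(\star)$.

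\textbf{How $(\star)$ gives part~(i).} For the first inequality one just substitutes $(\star)$ into \eqref{eq:main_1alt}: since $\lvert C_\partial(u)\cap C_4\rvert+\tfrac12\lvert C_\partial(u)\setminus C_4\rvert+(n-1)\ge q$, \eqref{eq:main_1alt} yields $\lvert C_\interior(u)\rvert\le\mu(u)-\chi(M)-q$ at once. For the second inequality, first observe that if $C_\interior(u)=\emptyset$ the claim is immediate, since then $0=\lvert C_\interior(u)\rvert\le\mu(u)-\chi(M)-q$ by the first inequality, whence $2\mu(u)-2\chi(M)-2q\ge 0$. In general, substituting the bound on $\lvert C_\interior(u)\rvert$ from \eqref{eq:main_1alt} into the estimate of Theorem~\ref{thm:main_2} and simplifying (using $\tfrac12\lvert C_\partial(u)\rvert=\tfrac12\lvert C_\partial(u)\cap C_4\rvert+\tfrac12\lvert C_\partial(u)\setminus C_4\rvert$) gives
\[
  \sum_{z\in C_\interior(u)}\Gamma_u(z)+\tfrac12\sum_{z\in C_\partial(u)}\Gamma_u(z)\ \le\ 2\bigl(\mu(u)-\chi(M)\bigr)-2(n-1)-\tfrac12\lvert C_\partial(u)\cap C_4\rvert.
\]
By Lemma~\ref{lem:vanishing_order}, $\Gamma_u(z)=\deg(z)-1$ for $z\in C_\partial(u)$, so $\Gamma_u(z)=2$ if $\deg(z)=3$ and $\Gamma_u(z)\ge 3$ if $\deg(z)\ge 4$; hence $\tfrac12\sum_{z\in C_\partial(u)}\Gamma_u(z)\ge\lvert C_\partial(u)\setminus C_4\rvert+\tfrac32\lvert C_\partial(u)\cap C_4\rvert$. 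Inserting this and then using $(\star)$ in the doubled form $\lvert C_\partial(u)\setminus C_4\rvert+2\lvert C_\partial(u)\cap C_4\rvert+2(n-1)\ge 2q$ produces $\sum_{z\in C_\interior(u)}\Gamma_u(z)\le 2\mu(u)-2\chi(M)-2q$.

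\textbf{Proof of $(\star)$.} The decisive point is a parity property of the contours of $\partial M$: \emph{each contour $\gamma$ contains an even number of critical points of odd degree}. To see this, traverse $\gamma$ once and follow the sign of $u$ on the nodal domain that touches $\gamma$ from the interior. Along each arc of $\gamma$ lying between consecutive critical points this sign is constant, since $\gamma$ meets the other nodal arcs only at critical points; and across a critical point $z\in\gamma$ of degree $d$ it flips precisely when $d$ is odd, because by Proposition~\ref{prop:Cheng}(iv) the $d$ nodal arcs at $z$ (two of which lie in $\partial M$) split a neighbourhood of $z$ into $d-1$ sectors of alternating sign, so the two sectors flanking $z$ along $\gamma$ carry equal signs exactly when $d-1$ is even. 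Since the sign returns to its starting value after one full loop, the number of odd-degree critical points on $\gamma$ is even. Now call a contour \emph{trivial} if it contains no critical point — equivalently, it is a circle component of $N(u)$ — and let $q_0$ be their number. For a non-trivial contour $\gamma$ put $a_\gamma=\lvert C_\partial(u)\cap C_4\cap\gamma\rvert$ and $b_\gamma=\lvert(C_\partial(u)\setminus C_4)\cap\gamma\rvert$ (the numbers of its critical points of degree $\ge 4$ and of degree exactly $3$), so $a_\gamma+b_\gamma\ge 1$; if $a_\gamma=0$, then every critical point on $\gamma$ has the odd degree $3$, so the parity property forces $b_\gamma$ even, hence $\ge 2$. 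In all cases $a_\gamma+\tfrac12 b_\gamma\ge 1$, and therefore $\lvert C_\partial(u)\cap C_4\rvert+\tfrac12\lvert C_\partial(u)\setminus C_4\rvert=\sum_{\gamma\ \mathrm{non\text{-}trivial}}(a_\gamma+\tfrac12 b_\gamma)\ge q-q_0$. On the other hand each trivial contour is a connected component of $N(u)$ on its own, whereas there is at least one critical point in $N(u)$, which must lie in a component different from all of these; thus $n\ge q_0+1$. Adding the two estimates yields $(\star)$.

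\textbf{Main obstacle.} The hard part is the parity property of contours — making rigorous that the interior-side sign of $u$ along a contour flips across a boundary critical point exactly according to the parity of its degree, and spotting that this is precisely what is needed. A smaller but essential point is that the plain estimate \eqref{eq:main_1} of Theorem~\ref{thm:main_1} does not suffice for $(\star)$: a contour carrying a single critical point of even degree~$4$ would then count only with weight $\tfrac12$ although it should count with weight~$1$, and it is exactly the $C_4$-refinement \eqref{eq:main_1alt} (flagged in Remark~\ref{rmk:main_1_better}) that makes up the missing half-unit. Once $(\star)$ is established, the remainder is routine bookkeeping with \eqref{eq:main_1alt}, Theorem~\ref{thm:main_2} and Lemma~\ref{lem:vanishing_order}.
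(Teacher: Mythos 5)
Your proof is correct and follows essentially the same route as the paper: your inequality $(\star)$ is exactly the first display in the paper's proof of Theorem~\ref{thm:contours}, your parity count of odd-degree boundary critical points is a fleshed-out version of the paper's one-line justification (``since the eigenfunction needs to change sign at every nodal arc, either $\deg(z)\ge 4$ or there exists a second critical point on the same contour''), and the conclusion is drawn, as in the paper, from \eqref{eq:main_1alt}, Theorem~\ref{thm:main_2} (the paper packages the step $\Gamma_u(z)=\deg(z)-1\ge 2$ on $C_\partial(u)$ into a second displayed inequality, you use it inline) and Proposition~\ref{prop:courant} for part (ii); the only blemish is the parenthetical ``equal signs exactly when $d-1$ is even'', which should read ``odd'' (equivalently, $d$ even), though the conclusion you actually use --- sign flip iff $d$ odd --- is the correct one. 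Note also that, exactly like the paper's own argument, your proof yields the bound for $\sum_{z\in C_\interior(u)}\Gamma_u(z)$ rather than for $\sum_{z\in C(u)}\Gamma_u(z)$ as literally printed; the printed version is in fact false (for the second Dirichlet eigenfunction of the disk the left-hand side equals $4$ while $2\mu(u)-2\chi(M)-2q(M)=0$), so ``$C(u)$'' there is evidently a typo for ``$C_\interior(u)$'', consistent with the planar-domain corollary following Theorem~\ref{thm:main_domain}, and your proof matches the intended statement.
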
 
\begin{remark}
Note, that the upper bound in Theorem~\ref{thm:contours} only depends on the genus $g$ of the Riemannian surface. In particular, we have
\begin{equation}
    \chi(M) + q(M) = \begin{cases} 2-2g, &\quad M \text{ orientable,}\\
    1-g, &\quad M \text{ not orientable.} \end{cases}
\end{equation}
\end{remark}
\begin{proof}[Proof of Theorem~\ref{thm:contours}]
The restriction of the nodal set to any contour is either homeomorphic to a circle or contains a critical point $z$. In the latter case, since the eigenfunction needs to change sign at every nodal arc, either $\deg(z) \ge 4$ or there exists a second critical point on the same contour. In other words
\[
|C_\partial(u) \cap C_4(u) | + \frac{1}{2} |C_\partial(u) \setminus C_4(u)| + (n(u)-1) \ge q(M).
\]
Similarly, we conclude
\[
\frac{1}{2} \sum_{z\in C_\partial(u)} \Gamma_u(z) + \frac{1}{2} |C_\partial(u) \cap C_4(u)| + 2(n(u) -1) \ge 2q(M). 
\]
The statement now follows from Remark~\ref{rmk:main_1_better} and Theorem~\ref{thm:main_2}.
\end{proof}

\begin{remark}
    We believe that the above results are already interesting in the case of empty boundary, that is for \emph{closed surfaces}.
    For instance, we infer from Theorem~\ref{thm:main_1} and Corollary~\ref{cor:main_2}
    \[
    \lvert C (u) \rvert
    \leq 
    \mu(u) - \chi(M)
    \quad
    \text{and}
    \quad
    \sum_{z \in C(u)}
    \Gamma_u(z)
    \leq 
    \mu(u) - \chi(M).
    \]
\end{remark}

\section{Examples}
    \label{sec:examples}

In this section we discuss examples in which an eigenbasis can be explicitly computed.

\subsection{On the sphere}
 Let us consider the sphere $M=S_0$. In this case, the eigenfunctions of the spherical Laplacian are known as the \emph{spherical harmonics} (see \cite[Section~14.30]{NIST:DLMF} and \cite[Section~VII.5]{CourantH-66}). They are used for example to describe the angular part of wavefunctions in electrons in atoms and can be used to solve the Schrödinger equation for the hydrogen atom and is used to describe the orbital model for atoms.

\begin{figure}[ht]
\centering
\begin{minipage}{.4\textwidth}
	\begin{tikzpicture} 
		\def\R{2.5} 
		\def\angEl{35} 
		\draw (0,0) circle (\R);
		\foreach \t in {-80,-60,...,80} { \DrawLatitudeCircle[\R]{\t}{blue} }
		\foreach \t in {-5,-35,...,-175} { \DrawLongitudeCircle[\R]{\t}{gray}}
		\node[draw,circle,minimum size=5cm](hdm){};
		\DrawLongitudeCircle[\R]{115}{blue}
	\end{tikzpicture}
 \end{minipage}\hspace{1em}
 \begin{minipage}{.4\textwidth}
	\begin{tikzpicture} 
		\def\R{2.5} 
		\def\angEl{35} 
		\draw (0,0) circle (\R);
		\foreach \t in {-80,-60,...,80} { \DrawLatitudeCircle[\R]{\t}{gray} }
		\foreach \t in {-5,-35,...,-175} { \DrawLongitudeCircle[\R]{\t}{blue}}
		\node[draw,circle,minimum size=5cm](hdm){};
	\end{tikzpicture}
\end{minipage}
\caption{Different nodal sets of spherical harmonics. The left is an example for which the optimal bounds for the number of critical points. The right spherical hamonics function does not satisfy the optimal bound for the number of critical points, however the vanishing orders satisfy the optimal bound.}
\label{fig:sphericalfunctions}
\end{figure}

A complete system of eigenfunctions can be determined  via
\begin{equation}\label{eq:sphericalharmonics}
    Y_{\ell,m}(\phi, \theta) = \begin{cases}
        P_\ell^0(\cos(\theta)), \quad m=0\\
        P_\ell^m(\cos(\theta)) \cos(m\phi), \quad 1\le m \le \ell\\
        P_\ell^{|m|}(\cos(\theta)) \sin(|m|\phi), \quad -\ell \le m \le -1
    \end{cases}
\end{equation}
for $\ell=0,1,2, \ldots$  with $m=-\ell, \ldots, 0, 1, \ldots, \ell$, where $P_\ell^m(\cos(\theta))$ is the associate Legendre polynomial  (see \cite[Section~VII.5]{CourantH-66}). The corresponding eigenvalues are $\lambda_{\ell}= \ell (\ell+1)$ and $Y_{\ell,m}$ for $m=-\ell, \ldots, 0, 1, \ldots, \ell$ form a basis of the corresponding eigenspace. The associated Legendre polynomial $P_\ell^{|m|}$ has $\ell-|m|$ roots in $(-1,1)$ (see \cite[Section~14.16]{NIST:DLMF} or \cite[Chapter~IX]{hobson1931theory}) and due to the form of \eqref{eq:sphericalharmonics} the node set of the spherical harmonics consists of $m$ longitudes and  $\ell-m$ latitudes intersecting in $2m(\ell-m)$ points.  The number of critical points, vanishing order and nodal count of the spherical harmonic functions $Y_{\ell,m}$ can be easily obtained (see Figure~\ref{fig:sphericalfunctions}) In summary, $Y_{\ell, m}$ has
\begin{itemize}
    \item one nodal domain for $\ell=0$, two nodal domains for $\ell=1$, $\ell+1$ nodal domains for $m=0$ and $\ell \ge 2$, and $2m(\ell-m+1)$ otherwise;
    \item $2m(\ell-m)$ critical points if $|m|\le 1$. Otherwise $2m(\ell-m)+2$ critical points;
    \item The $2m(\ell-m)$ critical points are vanishing of order $2$ if $|m|\le 1$. For $|m|\ge 2$ there exist two additional critical points of order $m$.
\end{itemize}
The nodal partitions are cellularly embedded. 
We find for $\ell \ge 2$ and $m\neq 0$
\begin{equation}
\lvert C(Y_{\ell, m}) \rvert \le 2m(\ell-m+1) -2.
\end{equation}
One easily verifies that equality holds if $m=1,2$ and the bounds in Theorem~\ref{thm:main_1} and Theorem~\ref{thm:main_2} are therefore sharp. On the other hand, for $m\ge 3$ equality cannot hold due to the existence of critical points with vanishing order $m\ge 3$.

\subsection{On the torus} 

Next, we show that the bounds in Theorems~\ref{thm:main_1} and~\ref{thm:main_2} are also optimal on the torus $S_1$.
More precisely, we consider the \emph{flat torus}, that is, we can identify $S_1$ with $\RR^2 \operatorname{mod} 2 \pi \ZZ^2$ with the Riemannian metric equal to the usual Euclidean scalar product at every point.
Alternatively, we can interpret the Laplacian on $S_2$ with this metric as the Laplacian on the square $(- \pi, \pi)^2$ with periodic boundary conditions.

The spectrum and eigenfunctions are then explicitly known and the following lemma is folklore:

\begin{lemma}
    \label{lem:eigenspaces_torus}
    An orthonormal basis of eigenfunctions of $-\Delta$ on $(- \pi, \pi)^2$ with periodic boundary conditions is given by $(e_l)_{l \in \ZZ^2}$, where 
    \[
    e_l(x)
    =
    \frac{1}{\sqrt{2 \pi}}
    \exp (x \cdot l).
    \]
    The eigenvalue, corresponding to $e_l$, is $\lvert l \rvert^2 = l_1^2 + l_2^2$.
\end{lemma}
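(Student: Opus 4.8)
The plan is to proceed in three steps: first verify that each $e_l$ is a genuine eigenfunction satisfying the periodic boundary conditions, then check that the family is orthonormal by reducing to one-dimensional integrals, and finally establish completeness, which is the only substantial point.

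For the first step, I would note that each $e_l$ extends to a smooth $2\pi$-periodic function on $\RR^2$, hence descends to a smooth function on the flat torus $\TT^2 = \RR^2 / 2\pi\ZZ^2$; equivalently, $e_l$ together with all its derivatives agrees on opposite sides of the square $(-\pi,\pi)^2$, which is precisely what the periodic boundary conditions demand. Differentiating twice in each variable gives $\partial_{x_j}^2 e_l = - l_j^2\, e_l$, whence $-\Delta e_l = (l_1^2 + l_2^2)\, e_l = \lvert l \rvert^2 e_l$, so $e_l$ is an eigenfunction with eigenvalue $\lvert l \rvert^2$. For the second step, I would write the $L^2$-inner product as a product of two one-dimensional integrals,
\[
    \int_{(-\pi,\pi)^2} e_l\, \overline{e_{l'}}\,\drm x
    \;=\;
    \frac{1}{2\pi}\,\prod_{j=1}^{2} \int_{-\pi}^{\pi} \euler^{\ii (l_j - l'_j) x_j}\,\drm x_j,
\]
and observe that each factor equals $2\pi$ when $l_j = l'_j$ and vanishes otherwise, being the integral of a nonconstant complex exponential over a full period. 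Fixing the normalizing constant so that $\lVert e_l \rVert_{L^2} = 1$, this yields $\langle e_l, e_{l'}\rangle_{L^2} = \delta_{l l'}$, so that $(e_l)_{l\in\ZZ^2}$ is an orthonormal system in $L^2((-\pi,\pi)^2)$.

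The main obstacle is completeness, that is, density of the span of $\{e_l : l \in \ZZ^2\}$ in $L^2((-\pi,\pi)^2)$. I would deduce this from the Stone--Weierstrass theorem: finite linear combinations of the $e_l$ form a subalgebra of $C(\TT^2)$ which contains the constants, separates points of $\TT^2$, and is closed under complex conjugation since $\overline{e_l} = e_{-l}$; hence it is uniformly dense in $C(\TT^2)$ and, a fortiori, dense in $L^2(\TT^2) = L^2((-\pi,\pi)^2)$. (Alternatively, one can invoke Fej\'er's theorem on uniform convergence of the Ces\`aro means of Fourier series of continuous periodic functions, avoiding any appeal to abstract algebra.) Having produced an orthonormal basis of $L^2$ consisting of eigenfunctions of $-\Delta$, and knowing from Section~\ref{sec:preliminaries} that $-\Delta$ on the torus has compact resolvent and therefore admits an orthonormal basis of eigenfunctions, we conclude that $(e_l)_{l\in\ZZ^2}$ is exactly such a basis, with eigenvalues $\lvert l \rvert^2$, which proves the lemma.
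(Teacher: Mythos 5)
Your proof is correct. The paper itself offers no argument for this lemma --- it is explicitly labelled folklore --- so there is nothing to compare against; your verification (eigenfunction property and periodicity, orthonormality via separation into one-dimensional exponential integrals, and completeness via Stone--Weierstrass or Fej\'er) is exactly the standard argument one would supply, and the concluding appeal to compact resolvent is not even needed once completeness of the orthonormal system is established. Note that you have silently (and correctly) repaired two typos in the statement: the exponent should be $\ii\, x \cdot l$ rather than $x \cdot l$, and the normalizing constant making $\lVert e_l \rVert_{L^2((-\pi,\pi)^2)} = 1$ is $\tfrac{1}{2\pi}$, not $\tfrac{1}{\sqrt{2\pi}}$; it would be worth stating this correction explicitly rather than only through the phrase ``fixing the normalizing constant''.
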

In particular, an eigenvalue $\lambda$ can have high multiplicity if many lattice points $l \in \ZZ^2$ lie on the circle with radius $\sqrt{\lambda}$ around zero.
By taking linear combinations of the four elements of the form $e_{(\pm l_1, \pm l_2)}$, we can also see that functions of the form
    \[
    u(x_1, x_2)
    =
    \cos(l_1 x_1) \cos(l_2 x_2)
    \]
    are eigenfunctions.
    However, it is easy to see that the nodal set of these eigenfunction will have exactly $4 \cdot l_1 \cdot l_2$ many critical points and equally many nodal domains, all of vanishing order $2$ (cf. Figure~\ref{fig:nodal_domains_torus} for an illustration).
    This shows that the upper bound of Theorem~\ref{thm:main_1} is sharp on the torus.
    \begin{figure}
        \begin{tikzpicture}
            \draw[thick, dashed] (-2,-2) rectangle (2,2);

            \draw[thick, dotted]
            (-1.5,-2) -- (-1.5,2);
            \draw[thick, dotted]
            (-.5,-2) -- (-.5,2);
            \draw[thick, dotted]
            (.5,-2) -- (.5,2);
            \draw[thick, dotted]
            (1.5,-2) -- (1.5,2);

            \draw[thick, dotted]
            (-2,-1) -- (2,-1);
            \draw[thick, dotted]
            (-2,1) -- (2,1);

            \draw[thick, fill = white]
            (-1.5,-1) circle (2pt);
            \draw[thick, fill = white]
            (-.5,-1) circle (2pt);
            \draw[thick, fill = white]
            (.5,-1) circle (2pt);
            \draw[thick, fill = white]
            (1.5,-1) circle (2pt);

             \draw[thick, fill = white]
            (-1.5,1) circle (2pt);
            \draw[thick, fill = white]
            (-.5,1) circle (2pt);
            \draw[thick, fill = white]
            (.5,1) circle (2pt);
            \draw[thick, fill = white]
            (1.5,1) circle (2pt);
            
            \draw (-1,1.5) node {$D_1$};
            \draw (0,1.5) node {$D_2$};
            \draw (1,1.5) node {$D_3$}; 
            \draw (1.75,1.5) node {$D_4$};
            
            \draw (-1,0) node {$D_1$};
            \draw (0,0) node {$D_2$};
            \draw (1,0) node {$D_3$}; 
            \draw (1.75,0) node {$D_4$};             
        \end{tikzpicture}

    \caption{Nodal set of the eigenfunction $u(x_1,x_2) = \cos(2 x_1) \cos(x_2)$ on the flat torus (opposides side of the square are identified) with eight critical points and eight nodal domains, illustrating sharpness of the upper bound of Theorem~\ref{thm:main_1}.}
    \label{fig:nodal_domains_torus}
    \end{figure}

We have seen before that on the sphere $S_0$ we can also find points of arbitrarily high degree in the nodal graph of eigenfunctions -- namely at the "north" and "south poles".
Let us demonstrate that one can observe a similar phenomenon on the flat torus.

\begin{theorem}[Cf.~\cite{Taeufer-17}]
    \label{thm:high_vanishing_order}
    On the flat torus $S_1$, for every point $x \in S_1$ and every $n \in \NN$, there is an eigenfunction $u$ of the Laplacian which has a point of vanishing order at least $n$ on which at least $2n$ arcs of the nodal set meet.
\end{theorem}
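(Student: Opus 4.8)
The plan is to exhibit an explicit family of eigenfunctions on the flat torus $S_1 = \RR^2/2\pi\ZZ^2$ whose nodal set, near a prescribed point, looks like the zero set of a harmonic polynomial of high degree, i.e. an equiangular system of many arcs. Since translations are isometries of the flat torus, it suffices to produce, for each $n$, an eigenfunction with a point of vanishing order at least $n$ at the origin; translating by $x$ then moves this point to any desired location. By Lemma~\ref{lem:vanishing_order}(ii), a point of vanishing order $n$ in the interior is exactly a point where $2n$ nodal arcs meet, so this reformulation captures the full claim.

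The key construction is as follows. Fix a large prime $p \equiv 1 \pmod 4$; by classical number theory (sums of two squares / the theory of Gaussian integers) the circle of radius $\sqrt{p^{2N}}$, or more flexibly a suitably chosen radius $\sqrt{\lambda}$, contains an arbitrarily large number of lattice points $l \in \ZZ^2$ with $|l|^2 = \lambda$. All the corresponding exponentials $e_l$ from Lemma~\ref{lem:eigenspaces_torus} lie in the same eigenspace, so any linear combination $u = \sum_{|l|^2 = \lambda} c_l e_l$ is an eigenfunction. The Taylor expansion of $u$ at the origin has, as its lowest-order nonvanishing part, a homogeneous polynomial whose coefficients are (up to universal constants) the moments $\sum_l c_l l^\alpha$; by choosing the $c_l$ appropriately — e.g. matching the expansion to $\operatorname{Re}\big((x_1 + \ii x_2)^n\big)$, which is a harmonic polynomial of degree $n$ — one can force all derivatives of order $< n$ to vanish at the origin while the order-$n$ part is nonzero. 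This is a finite linear-algebra problem: one needs the number of lattice points on the circle (hence the dimension of the eigenspace) to exceed the number $\binom{n+1}{1} = n+1$ of linear constraints coming from the Taylor coefficients of degrees $0,\dots,n-1$, which is guaranteed by choosing $\lambda$ with enough representations as a sum of two squares.

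Once such a $u$ is constructed, the Bers scaling theorem (invoked in the excerpt right after Lemma~\ref{lem:vanishing_order}) shows that near the origin $u$ behaves like its leading homogeneous Taylor polynomial, which is harmonic of degree exactly $n$ and therefore has a zero set consisting of $2n$ rays through the origin at equal angles $\pi/n$. Hence $u$ vanishes to order exactly $n$ at the origin and $2n$ nodal arcs meet there, and translating gives the statement for an arbitrary point $x$. The main obstacle is the arithmetic input: one must be sure that the eigenvalue $\lambda$ can be chosen so that the associated eigenspace has dimension larger than $n+1$ — this is exactly the statement that $r_2(\lambda)$, the number of ways of writing $\lambda$ as an ordered sum of two squares, is unbounded, which follows from taking $\lambda = \prod p_i$ (or a power thereof) over many distinct primes $\equiv 1 \pmod 4$. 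The rest is a routine, if slightly tedious, verification that the linear system for the coefficients $c_l$ is solvable and that the resulting leading term is genuinely the degree-$n$ harmonic polynomial one prescribed rather than something of lower degree; one should double check solvability of this Vandermonde-type system, which holds generically in the $c_l$ provided the lattice points are in sufficiently general position on the circle, and if necessary one enlarges $\lambda$ further to gain room. (I expect the authors' proof, citing~\cite{Taeufer-17}, follows essentially this route.)
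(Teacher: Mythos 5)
Your proposal follows essentially the same route as the paper: take an eigenvalue $\lambda$ whose circle $\{k\in\ZZ^2:\lvert k\rvert^2=\lambda\}$ carries many lattice points (sums of two squares), expand $u=\sum_k \mu_k e_k$ in a Taylor series at the origin, and kill all low-order coefficients by solving a finite homogeneous linear system with more unknowns than equations; the paper then concludes directly from Lemma~\ref{lem:vanishing_order}, choosing concretely $\lambda=5^{C}$ via Proposition~\ref{prop:sum_of_squares}. Two small remarks. First, your constraint count is off: in two variables the Taylor coefficients of degrees $0,\dots,n-1$ number $\binom{n+1}{2}=n(n+1)/2$, not $n+1$; this is immaterial, since all that matters is that the number of constraints is a finite function of $n$ while $\#I_\lambda$ can be made arbitrarily large. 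Second, the only step you flag as unverified --- prescribing the leading term to be exactly $\operatorname{Re}\bigl((x_1+\ii x_2)^n\bigr)$ and invoking Bers scaling, which would require solving an inhomogeneous (Vandermonde-type) system --- is unnecessary for the statement: any nontrivial solution of the homogeneous system vanishes to order at least $n$ at the chosen point, and Lemma~\ref{lem:vanishing_order}(ii) already yields at least $2n$ nodal arcs there, which is exactly how the paper closes the argument. Dropping that embellishment removes the one genuine uncertainty in your write-up; with it removed, your proof is correct and coincides with the paper's.
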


We also refer to~\cite{GermainMZ-24} for more recent developments on the vanishing order of eigenfunction on the flat torus.
The key is the sums-of-squares theorem by Gauss \cite{Gauss-1801}; see also \cite[Chapter 1]{Fricker-82}:
\begin{proposition}
    \label{prop:sum_of_squares}
    Let $n \in \NN$ have the prime factor decomposition
    \[
    n = p_1^{a_1} \cdot \dots \cdot p_k^{a_k} \cdot q_1^{b_1} \cdot \dots \cdot q_l^{b_l} \cdot 2^c
    \]
    where the $p_i$ are primes of the form $4k + 1$, and the $q_j$ are primes of the form $4k + 3$.
    Then the number of pairs $(l_1, l_2)$ satisfying $l_1^2 + l_2^2 = n$ is equal to
    \[
  \begin{cases}
   4 \cdot (1 + a_1) \cdot \dots \cdot (1 + a_n) 
   \quad
    & \text{if all $b_i$ are even},\\
   0
   & \text{else}.
  \end{cases}
  \]
\end{proposition}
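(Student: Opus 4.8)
The plan is to pass to the ring of Gaussian integers $\ZZ[\ii]$ and convert the counting problem into a factorization problem. First I would observe that the number of pairs $(l_1,l_2) \in \ZZ^2$ with $l_1^2 + l_2^2 = n$ is exactly the number of Gaussian integers $z = l_1 + \ii l_2$ with norm $N(z) := z \bar z = l_1^2 + l_2^2 = n$. Since $\ZZ[\ii]$ is a Euclidean domain, hence a unique factorization domain, with unit group $\{1,-1,\ii,-\ii\}$, and since multiplying $z$ by a unit leaves $N(z)$ unchanged while producing four pairwise distinct associates when $z \neq 0$, it suffices to count the $z$ with $N(z) = n$ up to units and multiply the result by $4$.

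Next I would recall the classification of primes of $\ZZ[\ii]$: the rational prime $2$ ramifies, $2 = -\ii(1+\ii)^2$ with $1+\ii$ a Gaussian prime of norm $2$; every rational prime $p \equiv 1 \pmod 4$ splits as $p = \pi_p \bar\pi_p$ with $\pi_p, \bar\pi_p$ non-associate Gaussian primes of norm $p$; and every rational prime $q \equiv 3 \pmod 4$ stays inert, i.e.\ is itself a Gaussian prime of norm $q^2$. The non-associativity of $\pi_p$ and $\bar\pi_p$ can be checked directly by running through the four units, or deduced from the fact that only $2$ divides the discriminant $-4$. Fixing a set of representatives of Gaussian primes up to associates --- namely $1+\ii$, one choice $\pi_p$ together with $\bar\pi_p$ for each $p \equiv 1 \pmod 4$, and $q$ itself for each $q \equiv 3 \pmod 4$ --- every nonzero $z \in \ZZ[\ii]$ has a unique expression
\[
z = u \cdot (1+\ii)^{e} \cdot \prod_i \pi_{p_i}^{c_i}\,\bar\pi_{p_i}^{d_i} \cdot \prod_j q_j^{f_j}
\]
with $u$ a unit and nonnegative integer exponents, and multiplicativity of the norm yields
\[
N(z) = 2^{e} \cdot \prod_i p_i^{\,c_i + d_i} \cdot \prod_j q_j^{\,2 f_j}.
\]

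Then I would match this against the factorization $n = \prod_i p_i^{a_i} \cdot \prod_j q_j^{b_j} \cdot 2^{c}$. By uniqueness of factorization in $\ZZ$, the condition $N(z) = n$ is equivalent to $e = c$, to $c_i + d_i = a_i$ for every $i$, and to $2 f_j = b_j$ for every $j$. The last family of equations is solvable --- and then uniquely, with $f_j = b_j/2$ --- precisely when every $b_j$ is even; if some $b_j$ is odd there is no such $z$, giving count $0$, which is the second case of the formula. If all $b_j$ are even, then $e$ and the $f_j$ are forced, while for each $i$ the number of pairs $(c_i,d_i)$ of nonnegative integers with $c_i + d_i = a_i$ is $a_i + 1$; as $u$ ranges independently over the four units, the total is $4\prod_i (a_i+1)$, as claimed.

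The only genuinely delicate points are the classification of Gaussian primes --- in particular that $p \equiv 1 \pmod 4$ splits with non-associate factors (this is where Fermat's two-square theorem is invoked) and that $q \equiv 3 \pmod 4$ remains inert --- and the bookkeeping of the unit factor, which is exactly what produces the overall factor of $4$; both ingredients are classical, see~\cite{Fricker-82}.
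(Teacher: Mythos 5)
Your argument is correct and complete. Note, though, that the paper does not prove this proposition at all: it is stated as a classical result of Gauss and simply referenced to~\cite{Gauss-1801} and~\cite[Chapter 1]{Fricker-82}, so there is no ``paper proof'' to compare against. What you supply is the standard Gaussian-integer argument, and it holds together: identifying lattice points with elements $z = l_1 + \ii l_2 \in \ZZ[\ii]$ of norm $n$, using that $\ZZ[\ii]$ is a UFD with exactly four units (so each nonzero associate class contributes exactly $4$ elements), invoking the splitting behaviour of rational primes ($2$ ramified, $p \equiv 1 \bmod 4$ split with non-associate conjugate factors, $q \equiv 3 \bmod 4$ inert), and then reading off from $N(z) = 2^{e}\prod_i p_i^{c_i+d_i}\prod_j q_j^{2f_j} = n$ that the count is $0$ when some $b_j$ is odd and $4\prod_i(a_i+1)$ otherwise, since each pair $(c_i,d_i)$ with $c_i + d_i = a_i$ gives $a_i+1$ choices while $e$ and the $f_j$ are forced. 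The delicate points you flag (non-associativity of $\pi_p$ and $\bar\pi_p$, and the bookkeeping of the unit producing the factor $4$) are handled correctly; checking $\bar\pi_p \neq u\pi_p$ for each of the four units is indeed a one-line verification. Incidentally, your conclusion $4\prod_i (a_i+1)$ also silently corrects the index typo in the statement, where $4\cdot(1+a_1)\cdots(1+a_n)$ should read $4\cdot(1+a_1)\cdots(1+a_k)$.
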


\begin{proof}[Proof of Theorem~\ref{thm:high_vanishing_order}]
By Proposition~\ref{prop:sum_of_squares} and the spectral decomposition in Lemma~\ref{lem:eigenspaces_torus} we infer that the Laplacian on the flat torus has eigenspaces of arbitrarily high dimension.
Due to the spectral decomposition of Lemma~\ref{lem:eigenspaces_torus}, every eigenfunction $u$ to an eigenvalue $\lambda \geq 0$ is of the form
\[
    u = \sum_{k \in I_\lambda} \mu_k e_k, \quad \mu_k \in \CC,
\]
where $I_\lambda = \{ k \in \ZZ^2 \colon \lvert k \rvert^2 = \lambda \}$.
Expanding the $e_k$ in a Taylor series around $0$, we find
 \[
 e_k(x) 
 =
 \sum_{\alpha \in \NN_0^d} \frac{1}{\alpha!} (D^\alpha e_k)(0) \cdot x^\alpha
 =
 \sum_{\alpha \in \NN_0^d} \frac{(i k)^\alpha}{\alpha!} x^\alpha
\]
where we used multindex notation, i.e. for $\alpha = (\alpha_1, ..., \alpha_d) \in \NN_0^d$, we write
 $\alpha!  := \alpha_1 ! \cdot \dots \cdot \alpha_d!$, 
 $D^\alpha := \partial_{x_1}^{\alpha_1} \dots \partial_{x_d}^{\alpha_d}$,
 $k^\alpha := k_1^{\alpha_1} \cdot \dots \cdot k_d^{\alpha_d}$,
 $\lvert \alpha \rvert_1 := \alpha_1 + \dots + \alpha_d$.
Then, $u$ can be written as
\[
 u(x)
 =
 \sum_{k \in I_\lambda} \mu_k e_k
 =
 \sum_{\alpha \in \NN_0^d} \frac{(i x)^\alpha}{\alpha!} 
  \left(
   \sum_{k \in I_\lambda} \mu_k k^\alpha
  \right).
\]
Since the Taylor series is locally absolutely convergent, $u$ vanishes to order $n$ at $0$ if for all $\alpha \in \NN_0^d$ with $\lvert \alpha \rvert_1 \leq n$, we have
\[
 \sum_{k \in I_\lambda} \mu_k k^\alpha = 0.
\]
This is a system of finitely many linear equations, indexed by $\alpha$, with variables $\{ \mu_k \}_{k \in I_\lambda}$.
More precisely
\begin{align*}
 \sharp \{ \text{equations} \} 
 &= 
 \sharp \{ \alpha \in \NN_0^d \colon \lvert \alpha \rvert_1 \leq N \}  =: C(n),
 \quad \text{\emph{fixed}, once we chose $n$},\\
 \sharp \{ \text{variables} \} 
 &=
 \sharp
 \{ k \in \ZZ^d \colon \lvert k \rvert^2 = \lambda \}
 =
 \# I_\lambda.
\end{align*}
We clearly can find a nontrivial solution $\{ \mu_k \}_{k \in I_\lambda}$, if for a given $C = C(n) \in \NN$ we can find $\lambda \geq 0$ such that $\sharp I_\lambda \geq C$. 
This will then yield a function $u$ which vanishes to order $n$ at $0$.
But by Proposition~\ref{prop:sum_of_squares}, we have that $I_{5^C}$ contains $4C$ many points, i.e. we can choose $\lambda = 5^C$.
The fact that at least $2n$ arcs meet at the critical point follows from Lemma~\ref{lem:vanishing_order}.
\end{proof}

From the proof of Theorem~\ref{thm:high_vanishing_order} it is obvious that the construction of functions with high vanishing order readily generalizes to higher dimensional flat tori.
However, due to the more complicated structure of the nodal set in higher dimensions, there is no longer a simple interpretation of the degree of vanishing as (half the) degree of a vertex in a graph.

\subsection{Nodal domains on bounded domains in the plane}

In this subsection, we discuss the special case of planar, bounded domains.
Throughout this subsection, we stipulate:
\begin{assumption}\label{ass:set-of-assumptions}
 $\Omega \subset \RR^2$ is compact, satisfies the interior cone property, and has piecewise $C^{1,\alpha}$ boundary for some $\alpha > 0$; this means that the boundary curve is locally the graph of a differentiable function with $\alpha$-H\"older continuous derivative.
\end{assumption}

For $m \in L^\infty(\Omega)$, consider the eigenvalue problem
\begin{equation}\label{eq:intro2}
\begin{cases}
-\Delta u+ mu = \lambda u, &\qquad \text{ in }\Omega\\
u= 0, &\qquad \text{ in } \partial \Omega,
\end{cases}
\end{equation}

 The analogue of Proposition~\ref{prop:Cheng} now holds for solutions to eigenvalue problems \eqref{eq:intro2} (see e.g. \cite[Section 2]{HelfferHT-09}). 

\begin{proposition}[Hartman-Wintner theorem on domains] 
    \label{prop:Cheng-domains}
    Suppose Assumption~\ref{ass:set-of-assumptions} is satisfied and let $u$ be a solution to \eqref{eq:intro2}, then 
    \begin{enumerate}[(i)]
        \item 
        The sets $C_{\interior}(u)$ and $C_\partial(u)$ are finite.
        \item 
        The nodal set is a union of finitely many $C^{1,\varepsilon}$-curves for some $\varepsilon>0$.
        These curves are either homeomorphic to the unit circle or they are homeomorphic to the unit interval and start and end at points of $C_\partial(u)$.
        \item 
        Whenever $m \geq 2$ of these curves meet at $z \in C_\interior(u)$, they locally form an equiangular system of $2m$ arcs, meeting at $z$ such that the outgoing tangent vectors have angles which are every multiple of $\pi /m$.
        \item 
        Whenever $m \geq 2$ of these curves meet at $z \in C_\partial(u)$, they locally form an equiangular system of $m+1$ arcs, meeting at $z$.
        Two arcs are contained in boundary $\partial M$ and the outgoing tangent vectors in direction of the other arcs are pointing into the surface.
        All outgoing tangent vectors along arcs at $z$ have angles which are multiples of $\pi/m$.
    \end{enumerate}
\end{proposition}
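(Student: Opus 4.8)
The plan is to follow, essentially verbatim, the three-step strategy already sketched after Proposition~\ref{prop:Cheng}, only now keeping track of the weaker regularity allowed by $m \in L^\infty(\Omega)$ and by the merely $C^{1,\alpha}$ boundary; the whole argument is carried out in~\cite[Section~2]{HelfferHT-09}, so in practice the task reduces to checking that the present hypotheses fall inside that framework. As a preliminary remark, the interior cone property together with the $C^{1,\alpha}$ boundary assumption puts \eqref{eq:intro2} in the scope of standard elliptic regularity, so that $u$ is continuously differentiable up to $\partial\Omega$ away from the finitely many corner points, making $\nabla u$ --- hence $C_\partial(u)$ --- meaningful there.

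First, the interior statement (iii) and the interior part of (ii). Writing \eqref{eq:intro2} as $-\Delta u + Vu = 0$ with $V := m - \lambda \in L^\infty(\Omega)$, on a neighbourhood of any $z \in \interior(\Omega)$ this is a scalar elliptic equation with bounded measurable zero-order coefficient, so the planar Hartman--Wintner theorem (\cite{HartmanW-53, HartmanW-54}, see also~\cite{Bers-55} and~\cite[Section~2]{HelfferHT-09}) applies: near $z$ the solution equals, to leading order, a homogeneous harmonic polynomial $P_N$ of degree $N = \Gamma_u(z)$, and its nodal set is the image, under a $C^{1,\varepsilon}$-diffeomorphism, of the equiangular star of $2N$ rays that constitutes the nodal set of $P_N$ --- a single regular $C^{1,\varepsilon}$-arc when $N = 1$ --- with tangent directions at angles that are multiples of $\pi/N$. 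The drop from the $C^2$ of Proposition~\ref{prop:Cheng} to $C^{1,\varepsilon}$ is exactly the price of $V$ being only bounded. The input that a solution cannot vanish to infinite order at an interior point is the strong unique continuation property, which in dimension two holds for divergence-form elliptic equations with bounded measurable leading coefficient (via the Bers--Nirenberg representation of solutions through quasiconformal maps); crucially, no Lipschitz assumption on the coefficients is needed here.

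Next, the boundary statement (iv) and the boundary part of (ii). Fix $z \in C_\partial(u)$ on a $C^{1,\alpha}$ piece of $\partial\Omega$. A local $C^{1,\alpha}$ change of coordinates straightening the boundary maps a half-neighbourhood of $z$ onto a half-disk and transforms $-\Delta + V$ into a divergence-form operator $-\diver(A\nabla\,\cdot\,) + \widetilde V$ with $A$ symmetric, uniformly elliptic and $C^{0,\alpha}$ (in particular bounded and measurable) and $\widetilde V \in L^\infty$, the function $u$ satisfying a homogeneous Dirichlet condition on the flat side. Reflecting $A$ symmetrically and $u$, $\widetilde V$ antisymmetrically across the flat side (exactly as in~\cite[Theorem~2.3]{HelfferHT-09}) yields an $H^1_{\mathrm{loc}}$-solution of a divergence-form elliptic equation on the full disk, again with bounded measurable leading coefficient and $L^\infty$ potential, hence still within the two-dimensional theory of the previous step. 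Applying the interior normal form to the reflected function at $z$ and exploiting its odd symmetry --- which forces the associated homogeneous harmonic polynomial to be odd under the reflection, and therefore forces exactly two of its nodal rays to lie along the straightened boundary --- produces the equiangular system of $m+1$ arcs, two of them contained in $\partial\Omega$ and the remaining $m-1$ pointing into $\Omega$, with all tangent directions at multiples of $\pi/m$.

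Finally, finiteness (i). The local normal forms of (ii)--(iv) show that every point of $C_\interior(u)$ and of $C_\partial(u)$ is an isolated branch point of the nodal set, and that $N(u)$ meets each $C^{1,\alpha}$ boundary arc in a locally finite set; since $\overline{\Omega}$ is compact and $\partial\Omega$ is a finite union of $C^{1,\alpha}$ arcs (the finitely many corner points contributing at most finitely many additional elements of $C_\partial(u)$), the sets $C_\interior(u)$ and $C_\partial(u)$ are finite, and consequently $N(u)$ is a finite union of the curves described in (ii). The only genuinely non-bookkeeping ingredient in all of this is the non-vanishing-to-infinite-order mechanism at the weak coefficient regularity the hypotheses allow; in two dimensions it is robust enough to tolerate the merely $C^{0,\alpha}$ leading coefficient produced by flattening a $C^{1,\alpha}$ boundary --- rather than the Lipschitz coefficients of~\cite{KochT-09} invoked in the manifold case --- which is also why the statement is confined to planar domains.
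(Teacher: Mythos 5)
Your proposal follows the same route the paper takes: the paper gives no standalone proof of Proposition~\ref{prop:Cheng-domains}, pointing instead to \cite[Section 2]{HelfferHT-09} and to the strategy sketched after Proposition~\ref{prop:Cheng}, which is exactly your argument --- interior planar Hartman--Wintner normal form, boundary straightening plus antisymmetric reflection as in \cite[Theorem 2.3]{HelfferHT-09}, and finiteness by compactness --- together with the correct observation that in two dimensions strong unique continuation tolerates the merely H\"older (rather than Lipschitz) leading coefficients produced by flattening a $C^{1,\alpha}$ boundary. The only point where you are thinner than the cited reference is the behaviour of the nodal set at the finitely many corner points permitted by Assumption~\ref{ass:set-of-assumptions}, where no flattening/reflection chart exists and local finiteness requires the cone-opening analysis of \cite{HelfferHT-09}; this, however, matches the level of detail the paper itself supplies.
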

We can recover our main results in this case under milder assumptions on the regularity of $\Omega$ and $m$. 
Denoting by $q$ the number of contours of $\Omega$, we have
\[
\chi(M) = 2- q.
\]
In particular, the analogue of Theorems~\ref{thm:main_1} and Theorem~\ref{thm:main_2} are:
\begin{theorem}\label{thm:main_domain}
  Let $u$ be a nontrivial solution of \eqref{eq:intro2} under Assumption~\ref{ass:set-of-assumptions}. Then, 
    \begin{align}\label{eq:main_2cor_1domain}
    \lvert C_{\interior}(u)\rvert + \frac{1}{2} \lvert C_{\partial}(u) \rvert 
    &\leq
    \mu(u) +q-2 - (n(u) -1)
    \end{align}
    and
    \begin{align}
    \label{eq:main_2cor_2domain}
   \sum_{z \in C_{\interior}(u)}
    \Gamma_u(z) + \frac{1}{2}\sum_{z\in C_{\partial}(u)} \Gamma_u(z) 
    &\leq 
    2 \mu(u)+ 2q -4- 2(n(u) -1). 
    \end{align}
    Equality holds in \eqref{eq:main_2cor_1domain} and \eqref{eq:main_2cor_2domain}, respectively, if and only if we have equality in~\eqref{eq:improvedeuler} and $\Gamma_u(z)=2$ for all $z\in C(u)$.
\end{theorem}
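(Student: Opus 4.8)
The plan is to reduce Theorem~\ref{thm:main_domain} to the already-established Corollary~\ref{cor:main_2} (equivalently, to Theorems~\ref{thm:main_1} and~\ref{thm:main_2}) by regarding $\Omega$ as a compact, connected Riemannian surface with boundary. The only genuinely new input is that Proposition~\ref{prop:Cheng-domains} provides, under the weaker Assumption~\ref{ass:set-of-assumptions}, exactly the same structural conclusions about the nodal set as Proposition~\ref{prop:Cheng} did in the smooth manifold setting: finiteness of $C_\interior(u)$ and $C_\partial(u)$, a decomposition of $N(u)$ into finitely many curves that are either closed loops or arcs joining boundary critical points, and the equiangular local picture at each critical point (hence the same degree/vanishing-order dictionary of Lemma~\ref{lem:vanishing_order}).

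First I would observe that all of Corollary~\ref{cor:graph}, Lemma~\ref{lem:adding_dummy_vertices}, Lemma~\ref{lem:vanishing_order}, and the definition of the nodal graph $\mathcal{G}_u$ go through verbatim with Proposition~\ref{prop:Cheng-domains} in place of Proposition~\ref{prop:Cheng}: the arguments only used the three structural bullets, not the extra regularity. In particular the key degree inequality~\eqref{eq:lower_bound_E} and identity~\eqref{eq:degordformula} remain valid: interior critical points have even degree at least $4$, boundary critical points have degree at least $3$, and $\deg(z) = 2\Gamma_u(z)$ resp.\ $\Gamma_u(z)+1$. Second, I would note that the Euler inequality Proposition~\ref{prop:Euler_inequality} and its refinement Proposition~\ref{prop:improvedeulersinequality} were already proved for compact connected surfaces \emph{with non-empty boundary}, with the boundary contours treated as part of the graph --- precisely the situation here --- so no adaptation is needed. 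Third, for a planar domain with $q$ contours one has $\chi(\Omega) = 2 - q$ (one may see this, e.g., by a standard handle/contour count, or by filling in the boundary discs to get a sphere), which is the only place the planarity enters quantitatively.

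With these ingredients in hand, inequalities~\eqref{eq:main_2cor_1domain} and~\eqref{eq:main_2cor_2domain} are just the conclusions of Theorem~\ref{thm:main_1} and Theorem~\ref{thm:main_2} with $\chi(M)$ replaced by $2-q$; the equality characterizations transfer identically, since they were phrased in terms of equality in~\eqref{eq:improvedeuler} together with $\Gamma_u(z)=2$ for all $z \in C(u)$. I would therefore write the proof as: "By Proposition~\ref{prop:Cheng-domains}, the conclusions of Proposition~\ref{prop:Cheng} hold, hence Corollary~\ref{cor:graph}, Lemma~\ref{lem:adding_dummy_vertices} and Lemma~\ref{lem:vanishing_order} remain valid. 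Since $\Omega$ is a compact connected surface with $\chi(\Omega) = 2-q$, Propositions~\ref{prop:Euler_inequality} and~\ref{prop:improvedeulersinequality} apply, and the proofs of Theorems~\ref{thm:main_1} and~\ref{thm:main_2} yield~\eqref{eq:main_2cor_1domain} and~\eqref{eq:main_2cor_2domain}."

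The main obstacle --- really the only point needing care --- is making sure that every lemma between Proposition~\ref{prop:Cheng} and Theorem~\ref{thm:main_2} was proved using \emph{only} the structural output of the Hartman--Wintner theorem and not any hidden use of $C^\infty$ regularity of $\partial M$ or of $m$; I would check in particular that Lemma~\ref{lem:vanishing_order} (vanishing order $=$ half the degree interiorly, degree minus one at the boundary) still holds when the nodal curves are merely $C^{1,\varepsilon}$ rather than $C^2$ and $m \in L^\infty$ --- which it does, because that lemma rests on the Bers scaling asymptotics and the equiangular structure, both supplied by Proposition~\ref{prop:Cheng-domains}(iii)--(iv). A secondary, entirely routine point is to justify $\chi(\Omega)=2-q$ under Assumption~\ref{ass:set-of-assumptions}; this follows because a compact planar domain with $q$ boundary contours is homeomorphic to a sphere with $q$ open discs removed.
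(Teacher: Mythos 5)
Your proposal is correct and follows essentially the same (implicit) route as the paper, which states Theorem~\ref{thm:main_domain} precisely as the analogue of Theorems~\ref{thm:main_1} and~\ref{thm:main_2} with $\chi(\Omega)=2-q$, the structural input under the weaker Assumption~\ref{ass:set-of-assumptions} being supplied by Proposition~\ref{prop:Cheng-domains} and the Euler inequality~\eqref{eq:improvedeuler} being purely topological. The only point worth making explicit is that \eqref{eq:main_2cor_2domain} comes from inserting the bound of Theorem~\ref{thm:main_1} into the right-hand side of Theorem~\ref{thm:main_2} (so that the $-(n(u)-1)$ terms are retained, unlike in Corollary~\ref{cor:main_2}), which your appeal to both theorems together does cover.
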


We can obtain then bounds for the interior critical points via Theorem~\ref{thm:contours}.
\begin{corollary}
      Let $u$ be a nontrivial solution to \eqref{eq:intro2} under Assumption~\ref{ass:set-of-assumptions}. Then,
    \begin{equation*}
    \lvert C_{\interior}(u) \rvert 
    \leq
    \mu(u) -2 
    \quad
    \text{and}
    \quad
   \sum_{z \in C_{\interior}(u)}
    \Gamma_u(z) 
    \leq 
    2 \mu(u)-4.
    \end{equation*}
\end{corollary}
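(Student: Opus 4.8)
\emph{Approach.} The statement is nothing but the domain analogue of Theorem~\ref{thm:contours}(i), so the plan is to read off the two inequalities of that theorem in the setting of Assumption~\ref{ass:set-of-assumptions} and substitute the value of $\chi(M)$. Recall that for a planar domain $\Omega$ with $q$ contours one has $\chi(M)=2-q$, and the number of contours $q(M)$ equals $q$. Plugging these into the right-hand sides of Theorem~\ref{thm:contours}(i) gives
\[
\mu(u)-\chi(M)-q(M)=\mu(u)-(2-q)-q=\mu(u)-2
\]
and
\[
2\mu(u)-2\chi(M)-2q(M)=2\mu(u)-2(2-q)-2q=2\mu(u)-4 .
\]
Hence it suffices to show that $\lvert C_\interior(u)\rvert\le\mu(u)-\chi(M)-q$ and $\sum_{z\in C(u)}\Gamma_u(z)\le 2\mu(u)-2\chi(M)-2q$ continue to hold when $u$ only solves \eqref{eq:intro2} under Assumption~\ref{ass:set-of-assumptions}; the second inequality of the corollary then follows from $\sum_{z\in C_\interior(u)}\Gamma_u(z)\le\sum_{z\in C(u)}\Gamma_u(z)$, using $\Gamma_u\ge 0$.

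\emph{Key steps.} To establish the two inequalities of Theorem~\ref{thm:contours}(i) in this setting, I would run the proof of Theorem~\ref{thm:contours} verbatim. Its analytic inputs are the structural facts about nodal sets — finiteness of $C_\interior(u)$ and $C_\partial(u)$, the equiangular meeting of nodal arcs, and the dictionary $\deg(z)=2\Gamma_u(z)$ at interior critical points and $\deg(z)=\Gamma_u(z)+1$ at boundary critical points — together with the domain versions of the main estimates, i.e. the domain analogue of the refined inequality \eqref{eq:main_1alt} (whose proof is unchanged once Proposition~\ref{prop:Cheng} is replaced by Proposition~\ref{prop:Cheng-domains}) and inequality \eqref{eq:main_2cor_2domain} of Theorem~\ref{thm:main_domain}. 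All of these are already available: the structural facts are exactly Proposition~\ref{prop:Cheng-domains} and the corresponding analogue of Lemma~\ref{lem:vanishing_order}. The remaining ingredient is purely topological and insensitive to the regularity of $\partial\Omega$ and $m$: each of the $q$ contours of $\partial\Omega$ is either a circle component of $N(u)$ — hence accounted for by the term $n(u)-1$ — or it carries a critical point, at which, since $u$ changes sign across every incident nodal arc, either $\deg(z)\ge 4$ or a second critical point lies on the same contour. This yields, exactly as in the Riemannian case,
\[
\lvert C_\partial(u)\cap C_4(u)\rvert+\tfrac12\lvert C_\partial(u)\setminus C_4(u)\rvert+(n(u)-1)\ge q
\]
and
\[
\tfrac12\sum_{z\in C_\partial(u)}\Gamma_u(z)+\tfrac12\lvert C_\partial(u)\cap C_4(u)\rvert+2(n(u)-1)\ge 2q .
\]
Combining these with the domain analogues of \eqref{eq:main_1alt} and of Theorem~\ref{thm:main_2} (via \eqref{eq:main_2cor_2domain}) gives the two claimed inequalities, and substituting $\chi(M)=2-q$ finishes the proof.

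\emph{Main obstacle.} There is no genuine obstacle here; the only mildly delicate point is the bookkeeping required to confirm that Assumption~\ref{ass:set-of-assumptions} — $m\in L^\infty(\Omega)$ and a piecewise $C^{1,\alpha}$ boundary — still supports all the tools invoked above, namely the embedded metric graph $\mathcal{G}_u$ (with dummy vertices inserted as in the general case when $N(u)$ has circle components or no critical point), the order–degree dictionary, and the Euler inequality of Proposition~\ref{prop:improvedeulersinequality}. This is precisely what Proposition~\ref{prop:Cheng-domains} and the domain version of Lemma~\ref{lem:vanishing_order} provide, and it is already implicit in the proof of Theorem~\ref{thm:main_domain}, so the corollary follows at once.
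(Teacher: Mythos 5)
Your proposal is correct and takes essentially the same route as the paper: the corollary is read off from Theorem~\ref{thm:contours}(i) (whose proof carries over to the domain setting via Proposition~\ref{prop:Cheng-domains} and the domain versions of the main estimates, i.e.\ Theorem~\ref{thm:main_domain}), substituting $\chi(M)=2-q$ and $q(M)=q$ so that $\mu(u)-\chi(M)-q(M)=\mu(u)-2$ and $2\mu(u)-2\chi(M)-2q(M)=2\mu(u)-4$. Your final observation that $\sum_{z\in C_\interior(u)}\Gamma_u(z)\le\sum_{z\in C(u)}\Gamma_u(z)$ is exactly the trivial weakening the paper also uses implicitly, so nothing is missing.
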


Our final example illustrates the importance of the finiteness of the number of contours in order to even ensure finiteness of the number of critical points.
\begin{example}\label{ex:infinite}
Consider the first Dirichlet eigenfunction on domain $\Omega_+$ which consists of a half disk with an infinite sequence of half-disks of decreasing radii removed, see Figure~\ref{infinite_mirroring}. 
By mirroring $\Omega_+$ and antisymmetric reflection we obtain an eigenfunction on the disk with infinitely many holes removed. It has infinitely many critical points on the axis of symmetry.
\begin{figure}[ht]
\centering
\begin{minipage}{0.4\textwidth}
\includegraphics[scale=0.5]{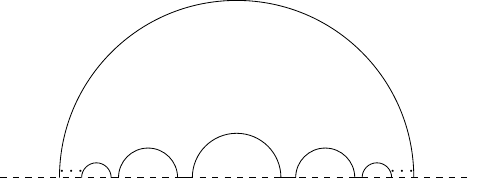}
\end{minipage} 
\begin{minipage}{0.4\textwidth}
\includegraphics[scale=0.5]{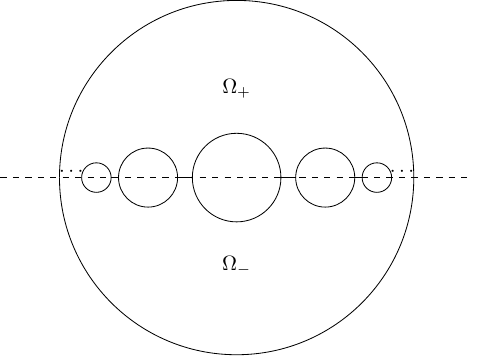}
\end{minipage}
\caption{\small On the left a semi-disk with infinitely many semi-disks cut out and the mirrored image on the right. 
The corresponding mirrored first Dirichlet eigenfunction is an eigenfunction on the mirrored domain with nodal domains $\Omega_+$ and $\Omega_-$.}\label{infinite_mirroring}
\end{figure}
However, the resulting domain has infinitely many contours, so our previous results do not apply.
Yet, it would be interesting to investigate whether in this case, one can still provide bounds on the number of \emph{interior} critical points.
This is a subject for future investigations.
\end{example}

\end{document}